\numberwithin{equation}{section}
\newtheorem{Theorem}{Theorem}[section]
\newtheorem{Lemma}[Theorem]{Lemma}
\theoremstyle{definition}
\newtheorem{remark}[Theorem]{Remark}
\newtheorem{Proposition}[Theorem]{Proposition}
\newcounter{RomanNumber}
\def\be{\begin{equation}}
\def\en{\end{equation}}
\def\bs{\begin{split}}
\def\es{\end{split}}
\title[On instability  of a generic two--fluid model] {On instability of a generic compressible
two--fluid model in $\mathbb R^3$}
\author{Guochun Wu}
\address{Guochun Wu \newline Fujian Province University Key Laboratory of Computational Science, School of Mathematical Sciences, Huaqiao University, Quanzhou 362021, P.R. China.}
\email{guochunwu@126.com}
\author{Lei Yao}
\address{Lei Yao \newline School of Mathematics and Statistics, Northwestern Polytechnical University, Xi'an 710129, P.R. China.
\newline School of Mathematics and Center for Nonlinear Studies, Northwest University, Xi'an 710127,P.R. China.}
\email{yaolei1056@hotmail.com}
\author{Yinghui Zhang*}
\address{Yinghui Zhang \newline School of Mathematics and Statistics, Guangxi Normal University, Guilin, Guangxi 541004, P.R.
China} \email{yinghuizhang@mailbox.gxnu.edu.cn}
\subjclass[2020]{76T10;\, 76N10.}
\keywords{Two--fluid model;\, large time behavior;\, classical
solutions.}\bigbreak
\subjclass[2010]{76N0;\, 76N10.}
\thanks{* Corresponding author: yinghuizhang@mailbox.gxnu.edu.cn}
\keywords{Non--conservative two--phase fluid model;\, instability;\, Cauchy problem.}\bigbreak
\date{\today}
\begin{document}
\begin{abstract}
We are concerned with the instability of a generic compressible two--fluid
model in the whole space $\mathbb{R}^3$, where the capillary pressure $f(\alpha^-\rho^-)=P^+-P^-\neq 0$ is taken into account.
For the case that the capillary pressure is a strictly decreasing
function near the equilibrium, namely, $f'(1)<0$,
  [Evje--Wang--Wen, Arch Rational Mech Anal
221:1285--1316, 2016] established global stability of the constant equilibrium state for the three--dimensional
Cauchy problem under some
smallness assumptions. Recently, [Wu--Yao--Zhang, arXiv:2204.10706] proved global stability of the constant equilibrium state for the case $P^+=P^-$ (corresponding to $f'(1)=0$).
In this work, we investigate the instability of the constant equilibrium state for the case that the capillary pressure is a strictly increasing function near the equilibrium, namely, $f'(1)>0$.
First, by employing Hodge decomposition technique and making detailed analysis of the Green's function for the corresponding linearized system, we construct solutions
of the linearized problem that grow exponentially in time in the Sobolev space $H^k$, thus leading to a global
instability result for the linearized problem.
Moreover, with the help of the global
linear instability result and a local existence theorem of classical solutions to the original nonlinear system, we can then
show the instability of the nonlinear problem in the sense of Hadamard by making a delicate analysis on the properties of the semigroup.
Therefore, our result shows that for the case $f'(1)>0$, the constant equilibrium state of the two--fluid model is linearly globally unstable and nonlinearly locally unstable
in the sense of Hadamard, which is in contrast to
the cases $f'(1)<0$ ([Evje--Wang--Wen, Arch Rational Mech Anal
221:1285--1316, 2016]) and $P^+=P^-$ (corresponding to $f'(1)=0$) ([Wu--Yao--Zhang, arXiv:2204.10706]) where the constant equilibrium state of the two--fluid model \eqref{1.5} is nonlinearly globally stable.
\end{abstract}

\maketitle

\section{\leftline {\bf{Introduction.}}}
\setcounter{equation}{0}
\subsection{Background and motivation}
As is well--known, most of the flows in nature are multi--fluid
flows. Such a terminology includes the flows of non--miscible fluids
such as air and water; gas, oil and water. For the flows of miscible
fluids, they usually form a ``new" single fluid possessing its own
rheological properties. One interesting example is the stable
emulsion between oil and water which is a non--Newtonian fluid, but
oil and water are Newtonian ones.\par
 One of the classic examples of multi--fluid flows is
small amplitude waves propagating at the interface between air and
water, which is called a separated flow. In view of modeling, each
fluid obeys its own equation and couples with each other through the
free surface in this case. Here, the motion of the fluid is governed
by the pair of compressible Euler equations with free surface:
\begin{align}
\partial_{t} \rho_{i}+\nabla \cdot\left(\rho_{i} v_{i}\right) &=0, \quad i=1,2,\label{1.1} \\
\partial_{t}\left(\rho_{i} v_{i}\right)+\nabla \cdot\left(\rho_{i} v_{i} \otimes v_{i}\right)+\nabla p_i &=-g\rho_{i}
e_3\pm F_D.\label{1.2}
\end{align}
In above equations, $\rho_1$ and $v_1$ represent the density and
velocity of the upper fluid (air), and  $\rho_2$ and $v_2$ denote
the density and velocity of the lower fluid (water). $p_{i}$ denotes
the pressure. $-g\rho_{i} e_3$ is the gravitational force with the
constant $g>0$ the acceleration of gravity and $e_3$ the vertical
unit vector, and $F_D$ is the drag force. As mentioned before, the
two fluids (air and water) are separated by the unknown free surface
$z=\eta(x, y, t)$, which is advected with the fluids according to
the kinematic relation:
\begin{equation}\partial_t\eta=v_{1,z}-v_{1,x}\partial_x \eta-v_{1, y}\partial_y \eta\label{1.3}\end{equation}
on two sides of the surface $z=\eta$ and the pressure is continuous
across this surface.\par When the wave's amplitude becomes large
enough, wave breaking may happen. Then, in the region around the
interface between air and water, small droplets of liquid appear in
the gas, and bubbles of gas also appear in the liquid. These
inclusions might be quite small. Due to the appearances of collapse
and fragmentation, the topologies of the free surface become quite
complicated and a wide range of length scales are involved.
Therefore, we encounter the situation where two--fluid models become
relevant if not inevitable. The classic approach to simplify the
complexity of multi--phase flows and satisfy the engineer's need of
some modeling tools is the well--known volume--averaging method (see
\cite{Ishii1, Prosperetti} for details). Thus, by performing such a
procedure, one can derive a model without surface: a two--fluid
model. More precisely, we denote $\alpha^{\pm}$ by the volume
fraction of the liquid (water) and gas (air), respectively.
Therefore, $\alpha^++\alpha^-=1$. Applying the volume--averaging
procedure to the equations \eqref{1.1} and \eqref{1.2} leads to the
following generic compressible two--fluid model:
\begin{equation}\label{1.4}
\left\{\begin{array}{l}
\partial_{t}\left(\alpha^{\pm} \rho^{\pm}\right)+\operatorname{div}\left(\alpha^{\pm} \rho^{\pm} u^{\pm}\right)=0, \\
\partial_{t}\left(\alpha^{\pm} \rho^{\pm} u^{\pm}\right)+\operatorname{div}\left(\alpha^{\pm} \rho^{\pm} u^{\pm} \otimes u^{\pm}\right)
+\alpha^{\pm} \nabla P^\pm=-g\alpha^{\pm}\rho^{\pm} e_3\pm F_D.
\end{array}\right.
\end{equation}
\par
We have already discussed the case of water waves, where a separated
flow can lead to a two--fluid model from the viewpoint of practical
modeling. As mentioned before, two--fluid flows are very common in
nature, but also in various industry applications such as nuclear
power, chemical processing, oil and gas manufacturing. According to
the context, the models used for simulation may be very different.
However, averaged models share the same structure as \eqref{1.4}. By
introducing viscosity effects and capillary pressure effects, one can generalize the above system
\eqref{1.4} to
\begin{equation}\label{1.5}
\left\{\begin{array}{l}
\partial_{t}\left(\alpha^{\pm} \rho^{\pm}\right)+\operatorname{div}\left(\alpha^{\pm} \rho^{\pm} u^{\pm}\right)=0, \\
\partial_{t}\left(\alpha^{\pm} \rho^{\pm} u^{\pm}\right)+\operatorname{div}\left(\alpha^{\pm} \rho^{\pm} u^{\pm} \otimes u^{\pm}\right)
+\alpha^{\pm} \nabla P^{\pm}\left(\rho^{\pm}\right)=\operatorname{div}\left(\alpha^{\pm} \tau^{\pm}\right), \\
P^{+}\left(\rho^{+}\right)-P^{-}\left(\rho^{-}\right)=f\left(\alpha^{-}
\rho^{-}\right),
\end{array}\right.
\end{equation} where
$\rho^{\pm}(x, t) \geqq 0, u^{\pm}(x, t)$ and
$P^{\pm}\left(\rho^{\pm}\right)=A^{\pm}\left(\rho^{\pm}\right)^{\bar{\gamma}^{\pm}}$
denote the densities, the velocities of each phase, and the two
pressure functions, respectively. $\bar{\gamma}^{\pm} \geqq 1,
A^{\pm}>0$ are positive constants. In what follows, we set
$A^{+}=A^{-}=1$ without loss of any generality. As in \cite{Evje9},
we assume that the capillary pressure $f$ belongs to $C^{3}([0, \infty))$. Moreover,
$\tau^{\pm}$ are the viscous stress tensors
\begin{equation}\label{1.6}
\tau^{\pm}:=\mu^{\pm}\left(\nabla u^{\pm}+\nabla^{t}
u^{\pm}\right)+\lambda^{\pm} \operatorname{div} u^{\pm} \mathrm{Id},
\end{equation}
where the constants $\mu^{\pm}$ and $\lambda^{\pm}$ are shear and
bulk viscosity coefficients satisfying the physical condition:
$\mu^{\pm}>0$ and $2 \mu^{\pm}+3 \lambda^{\pm} \geqq 0,$ which
implies that $\mu^{\pm}+\lambda^{\pm}>0.$ For more information
about this model, we refer to \cite{Brennen1, Bresch1, Bresch2,
Friis1, Ishii1, Prosperetti, Raja} and references therein.
However, it is well--known
that as far as mathematical analysis of two--fluid model is
concerned, there are many technical challenges. Some of them
involve, for example:
\begin{itemize}
\item The two--fluid model is a partially dissipative system.
More precisely, there is no dissipation on the mass conservation
equations, whereas the momentum equations have viscosity
dissipations;

\item The corresponding linear system of the model has
zero eigenvalue, which makes mathematical analysis (well--posedness
and stability) of the model become quite difficult and complicated;

\item Transition to single--phase regions, i.e, regions where the mass
$\alpha^{+} \rho^{+}$ or $\alpha^{-} \rho^{-}$ becomes zero, may
occur when the volume fractions $\alpha^{\pm}$ or the densities
$\rho^{\pm}$ become zero;

\item The system is non--conservative, since the non--conservative terms $\alpha^{\pm} \nabla
P^{\pm}$ are involved in the momentum equations. This brings various
 mathematical difficulties for us to employ methods used
for single phase models to the two--fluid model.

\end{itemize}\par
For the case that the capillary pressure is a strictly decreasing
function near the equilibrium, namely, $f'(1)<0$, Evje--Wang--Wen \cite{Evje9} obtained global stability of the constant equilibrium state for the three--dimensional
Cauchy problem of the two--fluid model \eqref{1.5} under the assumption that the initial perturbation is small in $H^2$-norm and bounded in $L^1$-norm. It should be noted that as pointed out by Evje--Wang--Wen in \cite{Evje9}, the assumption $f'(1)<0$
played a crucial role in their analysis and appeared to have an essential stabilization effect on the
model in question.
Bretsch et al. in
the seminal work \cite{Bresch1} considered a model similar to
\eqref{1.5}. More specifically, they made the following assumptions:
\begin{itemize} \item $P^{+}=P^{-}$ (particularly, $f'(1)=0$ in this case);\\
\item inclusion of viscous terms of the form \eqref{1.2} where $\mu^{\pm}$
depends on densities $\rho^{\pm}$ and $\lambda^{\pm}=0$;\\
\item inclusion of a third order derivative of $\alpha^{\pm}
\rho^{\pm}$, which are so--called  internal capillary forces
represented by the well--known Korteweg model on each phase.
\end{itemize}
They obtained the global weak solutions in the periodic domain with
$1<\overline{\gamma}^{\pm}< 6$. It is worth
mentioning that the method of \cite{Bresch1} doesn't work for the case without
the internal capillary forces. Later, Bresch--Huang--Li
\cite{Bresch2} established the global existence of weak solutions in
one space dimension without the internal capillary forces when
$\overline{\gamma}^{\pm}>1$ by taking
advantage of the one space dimension. However, the method of \cite{Bresch2} relies
crucially on the advantage of one space dimension, and particularly cannot be applied for high
dimensional problem. Recently, Wu--Yao--Zhang \cite{WYZ} showed
the global stability of the constant equilibrium state in three space dimension by
 exploiting the
dissipation structure of the model (with $P^+=P^-$ and without internal capillary forces) and making full use of several key observations.
For the case of
the special density-dependent viscosities with equal viscosity coefficients and the case of general constant
viscosities, Cui--Wang--Yao--Zhu
\cite{c1} and Li--Wang--Wu--Zhang \cite {LWWZ} proved the global stability of the constant equilibrium state
 for the three--dimensional
Cauchy problem with the internal capillary forces, respectively.
 \par
 To sum up,  the works \cite{Evje9} and \cite{WYZ} rely essentially on the assumption $f'(1)<0$ and
 $P^+=P^-$ (corresponding to $f'(1)=0$). Therefore, a natural and
important problem is that what will happen for the case that the capillary pressure is a strictly increasing
function near the equilibrium, namely, $f'(1)>0$. That is to say,
what about the stability of three--dimenional
Cauchy problem to the two--fluid model \eqref{1.5} with $f'(1)>0$. The main purpose of this work is to give a definite answer to this issue.
More precisely, we first employ Hodge decomposition technique and make detailed analysis of the Green's function for the corresponding linearized system to construct solutions
of the linearized problem that grow exponentially in time in the Sobolev space $H^k$, thus leading to a global
instability result for the linearized problem.
Then, based on the global
linear instability result and a local existence theorem of classical solutions to the original nonlinear system, we can
prove the instability of the nonlinear problem in the sense of Hadamard by making a delicate analysis on the properties of the semigroup.
Therefore, our result shows that for the case $f'(1)>0$, the constant equilibrium state of the two--fluid model \eqref{1.5} is linearly globally unstable and nonlinearly locally unstable
in the sense of Hadamard, which is in contrast to
the cases $f'(1)<0$ (\cite{Evje9}) and $P^+=P^-$ (corresponding to $f'(1)=0$) (\cite{WYZ}) where the constant equilibrium state of the two--fluid model \eqref{1.5} is nonlinearly globally stable.

\subsection{New formulation of system \eqref{1.5} and Main Results}
In this subsection, we devote ourselves to reformulating the system
\eqref{1.5} and stating the main results. To begin with, noting
the relation between the pressures of \eqref{1.5}$_3$, one has
\begin{equation}\label{1.7}
\mathrm{d} P^{+}-\mathrm{d} P^{-}=\mathrm{d} f\left(\alpha^{-}
\rho^{-}\right),
\end{equation}
where $P^{\pm}:=P^{\pm}\left(\rho^{\pm}\right).$ It is clear that
\[
\mathrm{d} P^{+}=s_{+}^{2} \mathrm{d} \rho^{+}, \quad \mathrm{d}
P^{-}=s_{-}^{2} \mathrm{d} \rho^{-}, \quad \text { where }
s_{\pm}^{2}:=\frac{\mathrm{d} P^{\pm}}{\mathrm{d}
\rho^{\pm}}\left(\rho^{\pm}\right)=\bar{\gamma}^{\pm}
\frac{P^{\pm}\left(\rho^{\pm}\right)}{\rho^{\pm}}.
\]
Here $s_{\pm}$ represent the sound speed of each phase respectively.
Motivated by \cite{Bresch1}, we introduce the fraction densities
\begin{equation}\label{1.8}
R^{\pm}=\alpha^{\pm} \rho^{\pm},
\end{equation}
which together with the fact that $\alpha^++\alpha^-=1$ gives
\begin{equation}\label{1.9}
\mathrm{d} \rho^{+}=\frac{1}{\alpha_{+}}\left(\mathrm{d}
R^{+}-\rho^{+} \mathrm{d} \alpha^{+}\right), \quad \mathrm{d}
\rho^{-}=\frac{1}{\alpha_{-}}\left(\mathrm{d} R^{-}+\rho^{-}
\mathrm{d} \alpha^{+}\right).
\end{equation}
By virtue of \eqref{1.7} and \eqref{1.9}, we finally get
\begin{equation}\label{1.10}
\mathrm{d} \alpha^{+}=\frac{\alpha^{-} s_{+}^{2}}{\alpha^{-}
\rho^{+} s_{+}^{2}+\alpha^{+} \rho^{-} s_{-}^{2}} \mathrm{d}
R^{+}-\frac{\alpha^{+} \alpha^{-}}{\alpha^{-} \rho^{+}
s_{+}^{2}+\alpha^{+} \rho^{-}
s_{-}^{2}}\left(\frac{s_{-}^{2}}{\alpha^{-}}+f^{\prime}\right)
\mathrm{d} R^{-}. \end{equation}
 Substituting \eqref{1.10} into \eqref{1.9}, we deduce
the following expressions:
\[
\mathrm{d} \rho^{+}=\frac{\rho^{+} \rho^{-}
s_{-}^{2}}{R^{-}\left(\rho^{+}\right)^{2}
s_{+}^{2}+R^{+}\left(\rho^{-}\right)^{2} s_{-}^{2}}\left(\rho^{-}
\mathrm{d} R^{+}+\left(\rho^{+}+\rho^{+} \frac{\alpha^{-}
f^{\prime}}{s_{-}^{2}}\right) \mathrm{d} R^{-}\right),
\]
and
\[
\mathrm{d} \rho^{-}=\frac{\rho^{+} \rho^{-}
s_{+}^{2}}{R^{-}\left(\rho^{+}\right)^{2}
s_{+}^{2}+R^{+}\left(\rho^{-}\right)^{2} s_{-}^{2}}\left(\rho^{-}
\mathrm{d} R^{+}+\left(\rho^{+}-\rho^{-} \frac{\alpha^{+}
f^{\prime}}{s_{+}^{2}}\right) \mathrm{d} R^{-}\right),
\]
which together with \eqref{1.7} gives the pressure differential
$\mathrm{d} P^{\pm}$
\[
\mathrm{d} P^{+}=\mathcal{C}^{2}\left(\rho^{-} \mathrm{d}
R^{+}+\left(\rho^{+}+\rho^{+} \frac{\alpha^{-}
f^{\prime}}{s_{-}^{2}}\right) \mathrm{d} R^{-}\right) ,\] and
\[
\mathrm{d} P^{-}=\mathcal{C}^{2}\left(\rho^{-} \mathrm{d}
R^{+}+\left(\rho^{+}-\rho^{-} \frac{\alpha^{+}
f^{\prime}}{s_{+}^{2}}\right) \mathrm{d} R^{-}\right) ,\] where
\[
\mathcal{C}^{2}:=\frac{s_{-}^{2} s_{+}^{2}}{\alpha^{-} \rho^{+}
s_{+}^{2}+\alpha^{+} \rho^{-} s_{-}^{2}}.\]\par \noindent Next, by noting the
fundamental relation: $\alpha^++\alpha^-=1$, we can get the
following equality:
\begin{equation}\label{1.11}
\frac{R^+}{\rho^+}+\frac{R^-}{\rho^-}=1, ~~\hbox{and thus}~~
\rho^-=\frac{R^-\rho^+}{\rho^+-R^+}.\end{equation} Then, it holds
from the pressure relation $\eqref{1.5}_3$ that
\begin{equation}\label{1.12}
\varphi(\rho^+, R^+,
R^-):=P^+(\rho^+)-P^-{\left(\frac{R^-\rho^+}{\rho^+-R^+}\right)}-f({R^-})=0.
\end{equation}

\noindent Thus, we can employ the implicit function theorem to
define $\rho^{+}$. To see this, by differentiating the above
equation with respect to $\rho^{+}$ for given $R^{+}$ and $R^{-}$,
we get
\[
\frac{\partial\varphi}{\partial\rho^+}(\rho^+, R^+,
R^-)=s_{+}^{2}+s_{-}^{2} \frac{R^{-}
R^{+}}{\left(\rho^{+}-R^{+}\right)^{2}},
\]
which is positive for any $\rho^{+}\in(R^+, +\infty)$ and
$R^{\pm}>0.$ This together with the implicit function theorem
implies that $\rho^{+}=\rho^{+}\left(R^{+}, R^{-}\right)
\in\left(R^{+},+\infty\right)$ is the unique solution of the
equation \eqref{1.12}. By virtue of \eqref{1.8}, \eqref{1.12} and
 the
fundamental fact that $\alpha^++\alpha^-=1$, $\rho^{-}$ and $\alpha^{\pm}$ can be defined by
\[
\begin{aligned}
\rho^{-}\left(R^{+}, R^{-}\right) &=\frac{R^{-} \rho^{+}\left(R^{+}, R^{-}\right)}{\rho^{+}\left(R^{+}, R^{-}\right)-R^{+}}, \\
\alpha^{+}\left(R^{+}, R^{-}\right) &=\frac{R^{+}}{\rho^{+}\left(R^{+}, R^{-}\right)}, \\
\alpha^{-}\left(R^{+}, R^{-}\right)
&=1-\frac{R^{+}}{\rho^{+}\left(R^{+},
R^{-}\right)}=\frac{R^{-}}{\rho^{-}\left(R^{+}, R^{-}\right)}.
\end{aligned}
\]
We refer the readers to [\cite{Bresch2}, P. 614] for more details.
\par
Therefore, we can rewrite system \eqref{1.5} into the following
equivalent form:
\begin{equation}\label{1.13}
\left\{\begin{array}{l}
\partial_{t} R^{\pm}+\operatorname{div}\left(R^{\pm} u^{\pm}\right)=0, \\
\partial_{t}\left(R^{+} u^{+}\right)+\operatorname{div}\left(R^{+} u^{+} \otimes u^{+}\right)+\alpha^{+} \mathcal{C}^{2}\left[\rho^{-} \nabla R^{+}+\left(\rho^{+}+\rho^{+} \frac{\alpha^{-} f^{\prime}}{s_{-}^{2}}\right) \nabla R^{-}\right] \\
\hspace{2.5cm}=\operatorname{div}\left\{\alpha^{+}\left[\mu^{+}\left(\nabla u^{+}+\nabla^{t} u^{+}\right)
+\lambda^{+} \operatorname{div} u^{+} \operatorname{Id}\right]\right\}, \\
\partial_{t}\left(R^{-} u^{-}\right)+\operatorname{div}\left(R^{-} u^{-} \otimes u^{-}\right)+\alpha^{-} \mathcal{C}^{2}\left[\rho^{-} \nabla R^{+}+\left(\rho^{+}-\rho^{-} \frac{\alpha^{+} f^{\prime}}{s_{+}^{2}}\right) \nabla R^{-}\right] \\
\hspace{2.5cm}=\operatorname{div}\left\{\alpha^{-}\left[\mu^{-}\left(\nabla
u^{-}+\nabla^{t} u^{-}\right)+\lambda^{-} \operatorname{div} u^{-}
\operatorname{Id}\right]\right\}.
\end{array}\right.
\end{equation}
In the present paper, we consider the initial value problem to
\eqref{1.13} in the whole space $\mathbb R^3$ subject to the initial condition
\begin{equation}\label{1.14} (R^{+}, u^{+}, R^{-}, u^{-})(x,
0)=(R_{0}^{+}, u_{0}^{+}, R_{0}^{-},
u_{0}^{-})(x)\rightarrow(R_{\infty}^{+}, \overrightarrow{0}, R_{\infty}^{-}, \overrightarrow{0}) \quad
\hbox{as}\quad |x|\rightarrow\infty \in \mathbb{R}^{3},
\end{equation}
where $R^{\pm}_\infty>0$ denote the background doping profile, and
for simplicity, are taken as 1 in this paper.
In this work, we investigate the instability of the constant equilibrium state for the Cauchy problem \eqref{1.13}--\eqref{1.14} in the case that $f^{\prime}(1)>0$, which should be kept in mind throughout the rest of this paper. Taking
\[ n^{\pm}=R^{\pm}-1,
\]
then we can rewrite \eqref{1.13} in terms of the varaibles $(n^+, u^+, n^-, u^-)$:
\begin{equation}\label{1.15}
\left\{\begin{array}{l}
\partial_{t} n^++\operatorname{div}u^+=F_1, \\
\partial_{t}u^{+}+\alpha_1\nabla n^++\alpha_2\nabla
n^--\nu^+_1\Delta u^+-\nu^+_2\nabla\operatorname{div} u^+=F_2, \\
\partial_{t} n^-+\operatorname{div}u^-=F_3, \\
\partial_{t}u^{-}+\alpha_3\nabla n^++\alpha_4\nabla
n^--\nu^-_1\Delta u^--\nu^-_2\nabla\operatorname{div} u^-=F_4, \\
\end{array}\right.
\end{equation}
where $\nu_{1}^{\pm}=\frac{\mu^{\pm}}{\rho^{\pm}(1,1)}$,
$\nu_{2}^{\pm}=\frac{\mu^{\pm}+\lambda^{\pm}}{\rho^{\pm}(1,1)}>0$,
$\alpha_{1}=\frac{\mathcal{C}^{2}(1,1)
\rho^{-}(1,1)}{\rho^{+}(1,1)}$,
$\alpha_{2}=\mathcal{C}^{2}(1,1)+\frac{\mathcal{C}^{2}(1,1)
\alpha^{-}(1,1) f^{\prime}(1)}{s_{-}^{2}(1,1)}$,
$\alpha_{3}=\mathcal{C}^{2}(1,1)$,
$\alpha_{4}=\frac{\mathcal{C}^{2}(1,1)
\rho^{+}(1,1)}{\rho^{-}(1,1)}-\frac{\mathcal{C}^{2}(1,1)
\alpha^{+}(1,1) f^{\prime}(1)}{s_{+}^{2}(1,1)}$,
 and
the nonlinear terms are given by

\begin{align}
\label{1.16}F_{1}=&-\operatorname{div}\left(n^{+} u^{+}\right), \\
F_{2}^{i}=&-g_+\left(n^{+}, n^{-}\right) \partial_{i} n^{+}-\bar{g}_{+}\left(n^{+}, n^{-}\right) \partial_{i} n^{-}
-\left(u^{+} \cdot \nabla\right) u_{i}^{+} \nonumber\\
&+\mu^{+} h_{+}\left(n^{+}, n^{-}\right) \partial_{j}
n^{+} \partial_{j} u_{i}^{+}+\mu^{+} k_{+}\left(n^{+}, n^{-}\right) \partial_{j} n^{-} \partial_{j} u_{i}^{+} \nonumber\\
\label{1.17}&+\mu^{+} h_{+}\left(n^{+}, n^{-}\right) \partial_{j}
n^{+} \partial_{i} u_{j}^{+}+\mu^{+} k_{+}\left(n^{+}, n^{-}\right)
\partial_{j} n^{-}
 \partial_{i} u_{j}^{+}\\
&+\lambda^{+} h_{+}\left(n^{+}, n^{-}\right) \partial_{i} n^{+}
\partial_{j} u_{j}^{+}+\lambda^{+} k_{+}\left(n^{+},
n^{-}\right) \partial_{i} n^{-} \partial_{j} u_{j}^{+} \nonumber\\
&+\mu^{+} l_{+}\left(n^{+}, n^{-}\right) \partial_{j}^{2} u_{i}^{+}+\left(\mu^{+}+\lambda^{+}\right) l_{+}\left(n^{+}, n^{-}\right) \partial_{i}
 \partial_{j} u_{j}^{+}, \nonumber\\
\label{1.18}F_{3}=&-\operatorname{div}\left(n^{-} u^{-}\right), \\
F_{4}^{i}=&-g_-\left(n^{+}, n^{-}\right) \partial_{i} n^{-}-
\bar{g}_{-}\left(n^{+}, n^{-}\right) \partial_{i} n^{+}-\left(u^{-} \cdot \nabla\right) u_{i}^{-}\nonumber \\
&+\mu^{-} h_{-}\left(n^{+}, n^{-}\right) \partial_{j} n^{+} \partial_{j} u_{i}^{-}+\mu^{-} k_{-}\left(n^{+}, n^{-}\right)
\partial_{j} n^{-} \partial_{j} u_{i}^{-} \nonumber\\
\label{1.19}&+\mu^{-} h_{-}\left(n^{+}, n^{-}\right) \partial_{j}
n^{+}
\partial_{i} u_{j}^{-}+\mu^{-} k_{-}\left(n^{+}, n^{-}\right)
 \partial_{j} n^{-} \partial_{i} u_{j}^{-} \\
&+\lambda^{-} h_{-}\left(n^{+}, n^{-}\right) \partial_{i} n^{+} \partial_{j} u_{j}^{-}+\lambda^{-} k_{-}\left(n^{+}, n^{-}\right)
 \partial_{i} n^{-} \partial_{j} u_{j}^{-}\nonumber \\
&+\mu^{-} l_{-}\left(n^{+}, n^{-}\right) \partial_{j}^{2}
u_{i}^{-}+\left(\mu^{-}+\lambda^{-}\right) l_{-}\left(n^{+},
n^{-}\right) \partial_{i} \partial_{j} u_{j}^{-},\nonumber
\end{align}

where
\begin{equation}\label{1.20}
\left\{\begin{array}{l}
g_{+}\left(n^{+}, n^{-}\right)=\frac{\left(\mathcal{C}^{2} \rho^{-}\right)\left(n^{+}+1, n^{-}+1\right)}{\rho^{+}\left(n^{+}+1, n^{-}+1\right)}-\frac{\left(\mathcal{C}^{2} \rho^{-}\right)(1,1)}{\rho^{+}(1,1)}, \\
g_{-}\left(n^{+}, n^{-}\right)=\frac{\left(\mathcal{C}^{2} \rho^{+}\right)\left(n^{+}+1, n^{-}+1\right)}{\rho^{-}\left(n^{+}+1, n^{-}+1\right)}-\frac{\left(\mathcal{C}^{2} \rho^{+}\right)(1,1)}{\rho^{-}(1,1)}-\frac{f^{\prime}\left(n^{-}+1\right)\left(\mathcal{C}^{2} \alpha^{+}\right)\left(n^{+}+1, n^{-}+1\right)}{s_{+}^{2}\left(n^{+}+1, n^{-}+1\right)} \\
\hspace{2.2cm}+\frac{f^{\prime}(1)\left(\mathcal{C}^{2} \alpha^{+}\right)(1,1)}{s_{+}^{2}(1,1)}, \\
\end{array}\right.\end{equation}

\begin{equation}\label{1.21}
\left\{\begin{array}{l}
\bar{g}_{+}\left(n^{+}, n^{-}\right)=\mathcal{C}^{2}\left(n^{+}+1, n^{-}+1\right)-=\mathcal{C}^{2}\left(1, 1\right)
+\frac{f^{\prime}\left(n^{-}+1\right)\left(\mathcal{C}^{2} \alpha^{-}\right)\left(n^{+}+1, n^{-}+1\right)}{s_{-}^{2}\left(n^{+}+1, n^{-}+1\right)}\\
\hspace{2.2cm}-\frac{f^{\prime}(1)\left(\mathcal{C}^{2} \alpha^{-}\right)(1,1)}{s_{-}^{2}(1,1)},\\
\bar{g}_{-}\left(n^{+}, n^{-}\right)=\mathcal{C}^{2}\left(n^{+}+1, n^{-}+1\right)-\mathcal{C}^{2}(1,1),\\
\end{array}\right.
\end{equation}

\begin{equation}\label{1.22}
\left\{\begin{array}{l}
h_{+}\left(n^{+}, n^{-}\right)=\frac{\left(\mathcal{C}^{2}\alpha^{-}\right)\left(n^{+}+1, n^{-}+1\right)}{(n^++1)s_{-}^{2}\left(n^{+}+1, n^{-}+1\right)},\\
h_{-}\left(n^{+}, n^{-}\right)=-\frac{\left(\mathcal{C}^{2} \right)\left(n^{+}+1, n^{-}+1\right)}{(\rho^-s_{-}^{2})\left(n^{+}+1, n^{-}+1\right)},
\end{array}\right.
\end{equation}

\begin{equation}\label{1.23}
\left\{\begin{array}{l}
k_{+}\left(n^{+}, n^{-}\right)=-\left[\frac{\mathcal{C}^{2}\left(n^{+}+1, n^{-}+1\right)}{(n^++1)(s_{+}^{2}\rho^+)\left(n^{+}+1, n^{-}+1\right)}+\frac{f^{\prime}(n^-+1)\mathcal{C}^{2}\left(n^{+}+1, n^{-}+1\right)}{(\rho^+\rho^-s_{+}^{2}s_{-}^{2})\left(n^{+}+1, n^{-}+1\right)}\right],\\
k_{-}\left(n^{+}, n^{-}\right)=-\frac{\left(\alpha^+\mathcal{C}^{2}\right)\left(n^{+}+1, n^{-}+1\right)}{(n^-+1)s_{+}^{2}\left(n^{+}+1, n^{-}+1\right)}+\frac{f^{\prime}(n^-+1)\left(\alpha^+\mathcal{C}^{2}\right)\left(n^{+}+1, n^{-}+1\right)}{(\rho^-s_{+}^{2}s_{-}^{2})\left(n^{+}+1, n^{-}+1\right)},\\
\end{array}\right.
\end{equation}

\begin{equation}\label{1.24}
l_{\pm}(n^+, n^-)=\frac{1}{\rho_{\pm}\left(n^{+}+1, n^{-}+1\right)}-\frac{1}{\rho_{\pm}\left(1, 1\right)}.
\end{equation}
Taking change of variables by
\[
n^{+} \rightarrow \alpha_{1} n^{+}, \quad u^{+} \rightarrow \sqrt{\alpha_{1} u^{+}}, \quad n^{-} \rightarrow \alpha_{4} n^{-}, \quad u^{-} \rightarrow \sqrt{\alpha_{4} u^{-}},
\]
and setting
\[
\beta_{1}=\sqrt{\alpha_{1}}, \quad \beta_{2}=\frac{\alpha_{2} \sqrt{\alpha_{1}}}{\alpha_{4}},
\quad \beta_{3}=\frac{\alpha_{3} \sqrt{\alpha_{4}}}{\alpha_{1}}, \quad \beta_{4}=\sqrt{\alpha_{4}}
\]
and
\[
\beta^{+}=\sqrt{\frac{\beta_{1}}{\beta_{2}}}, \quad \beta^{-}=\sqrt{\frac{\beta_{4}}{\beta_{3}}},
\]
the Cauchy problem \eqref{1.13} and \eqref{1.14} can be reformulated as
\begin{equation}\label{1.25}
\left\{\begin{array}{l}
\partial_{t} n^{+}+\beta_{1} \operatorname{div} u^{+}=\mathcal{F}_{1}, \\
\partial_{t} u^{+}+\beta_{1} \nabla n^{+}+\beta_{2} \nabla n^{-}-v_{1}^{+} \Delta u^{+}-v_{2}^{+} \nabla \operatorname{div} u^{+}=\mathcal{F}_{2}, \\
\partial_{t} n^{-}+\beta_{4} \operatorname{div} u^{-}=\mathcal{F}_{3}, \\
\partial_{t} u^{-}+\beta_{3} \nabla n^{+}+\beta_{4} \nabla n^{-}-v_{1}^{-} \Delta u^{-}-v_{2}^{-} \nabla \operatorname{div} u^{-}=\mathcal{F}_{4},
\end{array}\right.
\end{equation}
subject to the initial condition
\begin{equation}\label{1.26}
\left(n^{+}, u^{+}, n^{-}, u^{-}\right)(x, 0)=\left(n_{0}^{+}, u_{0}^{+}, n_{0}^{-}, u_{0}^{-}\right)(x) \rightarrow(0, \overrightarrow{0}, 0, \overrightarrow{0}), \quad \text { as }|x| \rightarrow+\infty,
\end{equation}
where the nonlinear terms are given by
\[
\mathcal{F}_{1}=\alpha_{1} F_{1}\left(\frac{n^{+}}{\alpha_{1}}, \frac{u^{+}}{\sqrt{\alpha_{1}}}\right), \quad \mathcal{F}_{2}=\sqrt{\alpha_{1}} F_{2}
\left(\frac{n^{+}}{\alpha_{1}}, \frac{u^{+}}{\sqrt{\alpha_{1}}}, \frac{n^{-}}{\alpha_{4}}, \frac{u^{-}}{\sqrt{\alpha_{4}}}\right),
\]
and
\[
\mathcal{F}_{3}=\alpha_{4} F_{3}\left(\frac{n^{-}}{\alpha_{4}}, \frac{u^{-}}{\sqrt{\alpha_{4}}}\right), \quad \mathcal{F}_{4}=\sqrt{\alpha_{4}} F_{4}\left(\frac{n^{+}}{\alpha_{1}}, \frac{u^{+}}{\sqrt{\alpha_{1}}}, \frac{n^{-}}{\alpha_{4}}, \frac{u^{-}}{\sqrt{\alpha_{4}}}\right).
\]
Noticing that
\begin{equation}\label{1.27}
\beta_{1} \beta_{4}-\beta_{2} \beta_{3}=-\frac{\mathcal{C}^{2}(1,1) f^{\prime}(1)}{\sqrt{\alpha_{1} \alpha_{4}} \rho^{+}(1,1)}<0,
\end{equation}
it is clear that $\beta^+\beta^-<1$. Before stating our main results, let us state the
corresponding linearized system of \eqref{1.25} as follows:
\begin{equation}\label{1.28}
\left\{\begin{array}{l}
\partial_{t} \tilde n^{+}+\beta_{1} \operatorname{div} \tilde u^{+}=0, \\
\partial_{t}\tilde u^{+}+\beta_{1} \nabla\tilde n^{+}+\beta_{2} \nabla\tilde n^{-}-v_{1}^{+} \Delta\tilde u^{+}-v_{2}^{+} \nabla \operatorname{div}\tilde u^{+}=0, \\
\partial_{t}\tilde n^{-}+\beta_{4} \operatorname{div}\tilde u^{-}=0, \\
\partial_{t}\tilde u^{-}+\beta_{3} \nabla\tilde n^{+}+\beta_{4} \nabla\tilde n^{-}-v_{1}^{-} \Delta\tilde u^{-}-v_{2}^{-} \nabla \operatorname{div}\tilde u^{-}=0.
\end{array}\right.
\end{equation}

\bigskip

\medskip

Now, we are in a position to state our main results. The first one is concerned with
the linear instability, which is stated in the following theorem.
\smallskip
\begin{Theorem}[Linear instability]\label{2mainth} Let $\theta=\frac{\sqrt{(\nu^+\beta_4^2+\nu^-\beta_1^2)^2+4\nu^+\nu^-(\beta_1\beta_2\beta_3\beta_4-\beta_1^2\beta_4^2)}
-(\nu^+\beta_4^2+\nu^-\beta_1^2)}{2\nu^+\nu^-}$ which is positive due to \eqref{1.27}, where $\nu^\pm=\nu_1^\pm+\nu_2^\pm.$ Then for any $\vartheta>0$, the linearized system \eqref{1.28} admits an unstable solution $(\tilde n^+_\vartheta,\tilde u^+_\vartheta,\tilde n^-_\vartheta,\tilde u^-_\vartheta)$  satisfying
	$$\tilde n^\pm_\vartheta \in C^0(0, \infty;  H^{2}(\mathbb{R}^3))\cap C^1(0, \infty; H^{1}(\mathbb{R}^3)),\quad \text{and}\quad \tilde u^\pm_\vartheta\in C^0(0, \infty; H^{2}(\mathbb{R}^3))\cap C^1(0, \infty; L^{2}(\mathbb{R}^3)),$$
and
\begin{equation}\label{1.29}\left\|\tilde n^+_{0,\vartheta}\right\|_{L^2}\left\|\tilde u^+_{0,\vartheta}\right\|_{L^2}\left\|\tilde n^-_{0,\vartheta}\right\|_{L^2}\left\|\tilde u^-_{0,\vartheta}\right\|_{L^2}>0.
\end{equation}
Moreover, the solution satisfies the following estimate:
\begin{equation}\text{e}^{(\theta-\vartheta) t} \|\tilde{n}^\pm_{0,\vartheta}\|_{L^2}\le \|\tilde{n}^\pm_\vartheta(t)\|_{L^2}\le \text{e}^{\theta t}\|\tilde{n}^\pm_{0,\vartheta}\|_{L^2} \quad \text{and}\quad \text{e}^{(\theta-\vartheta) t}\|\tilde{u}^\pm_{0,\vartheta}\|_{L^2}\le\|\tilde{u}^\pm_\vartheta(t)\|_{L^2}\le\text{e}^{\theta t} \|\tilde{u}^\pm_{0,\vartheta}\|_{L^2}.\label{1.30}\end{equation}
\end{Theorem}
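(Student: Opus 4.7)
The plan is to decouple \eqref{1.28} via Hodge decomposition, reduce it to a $4\times 4$ scalar spectral problem in Fourier space, exhibit a positive real eigenvalue $\lambda_+(r)$ of the symbol whose supremum over $r=|\xi|>0$ is exactly $\t$, and then build the unstable mode by concentrating the initial data on a high-frequency annulus on which $\lambda_+(r)>\t-\vartheta$. Writing $\tilde u^\pm=\na\phi^\pm+\psi^\pm$ with $\Dv\psi^\pm=0$, the solenoidal parts $\psi^\pm$ satisfy pure heat equations and can be discarded. Applying $\Dv$ to the momentum equations in \eqref{1.28} and setting $d^\pm:=\Dv\tilde u^\pm$ produces a closed scalar system for $(\tilde n^+,d^+,\tilde n^-,d^-)$. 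Fourier transforming turns it into $\p_t\widehat W=A(r)\widehat W$ for an explicit $4\times 4$ symbol $A(r)$, and eliminating $\widehat d^\pm$ through the continuity equations yields the characteristic polynomial
\be
P(\lambda,r)=(\lambda^2+\nu^+r^2\lambda+\beta_1^2r^2)(\lambda^2+\nu^-r^2\lambda+\beta_4^2r^2)-\beta_1\beta_2\beta_3\beta_4\, r^4.
\en

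The key algebraic observation is the decomposition
\be
P(\lambda,r)=\lambda^4+(\nu^++\nu^-)r^2\lambda^3+(\beta_1^2+\beta_4^2)r^2\lambda^2+r^4 Q(\lambda),
\en
where $Q(\lambda):=\nu^+\nu^-\lambda^2+(\nu^+\beta_4^2+\nu^-\beta_1^2)\lambda+(\beta_1^2\beta_4^2-\beta_1\beta_2\beta_3\beta_4)$ has $\t$ as its unique positive root, by the very definition of $\t$. Relation \eqref{1.27} gives $Q(0)<0$, so $P(0,r)<0$ for every $r>0$; conversely, for $\lambda\ge\t$ the first three terms of $P$ are strictly positive and $r^4 Q(\lambda)\ge 0$, so $P(\lambda,r)>0$. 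The intermediate value theorem therefore produces a unique positive real root $\lambda_+(r)\in(0,\t)$. Dividing $P=0$ by $r^4$ and letting $r\to\infty$ with $\lambda$ bounded reduces the equation to $Q(\lambda)=0$; simplicity of the root $\t$ of $Q$ combined with the implicit function theorem then forces $\lambda_+(r)\to\t$ as $r\to\infty$.

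Given $\vartheta>0$, choose $R_\vartheta$ so large that $\t-\vartheta<\lambda_+(r)<\t$ on the shell $r\in[R_\vartheta,R_\vartheta+1]$, and pick a radial nonnegative cutoff $\chi_\vartheta\in C_c^\infty(\R^3)$ supported in this shell and not identically zero. The $2\times 2$ reduction $(\lambda_+^2+\nu^+r^2\lambda_++\beta_1^2r^2)\widehat n^+=-\beta_1\beta_2 r^2\widehat n^-$ and its minus counterpart supply an eigenvector of $A(r)$ for $\lambda_+(r)$ with normalization $\widehat n^-=1$, $\widehat n^+=-\beta_1\beta_2r^2/(\lambda_+^2+\nu^+r^2\lambda_++\beta_1^2r^2)$, $\widehat\phi^+=-\lambda_+\widehat n^+/(\beta_1 r)$, $\widehat\phi^-=-\lambda_+/(\beta_4 r)$; promote each scalar $\widehat\phi^\pm$ to a vector via $\widehat{\tilde u}^\pm(\xi):=i(\xi/|\xi|)\widehat\phi^\pm(r)$. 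Declare
\[
\widehat W(t,\xi):=e^{\lambda_+(r)t}\chi_\vartheta(\xi)V_+(\xi),\qquad V_+:=(\widehat n^+,\widehat{\tilde u}^+,\widehat n^-,\widehat{\tilde u}^-).
\]
Each of the four scalar components of $V_+$ is nonvanishing on the annulus (numerators nonzero and the denominator $\lambda_+^2+\nu^+r^2\lambda_++\beta_1^2 r^2$ strictly positive), so every initial component has positive $L^2$ norm, giving \eqref{1.29}; smoothness of $\lambda_+$ at its simple root (via $\p_\lambda P(\lambda_+,r)\neq 0$) together with compact Fourier support yields Schwartz regularity in $x$ and the stated $C^0H^2\cap C^1H^1$, $C^0H^2\cap C^1L^2$ time regularity; finally the pointwise bound $e^{(\t-\vartheta)t}\le e^{\lambda_+(r)t}\le e^{\t t}$ on the support of $\chi_\vartheta$, integrated via Plancherel, gives \eqref{1.30}.

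The main technical obstacle is step 2: simultaneously proving that $\t$ is never attained yet is the exact supremum of positive real eigenvalues of $A(r)$ over $r>0$, together with the asymptotic limit $\lambda_+(r)\to\t$ as $r\to\infty$. Both rest on the split $P=(\text{manifestly positive})+r^4 Q$ and on the strict inequality $\beta_1\beta_4-\beta_2\beta_3<0$ of \eqref{1.27}, which is equivalent to the standing hypothesis $f'(1)>0$ and is precisely what forces $Q(0)<0$; if this sign were reversed the positive root $\t$ of $Q$ would disappear and the linear problem would instead be stable, in consistency with the stable cases $f'(1)<0$ and $f'(1)=0$ cited in the introduction.
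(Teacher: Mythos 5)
Your proposal is correct and follows essentially the same route as the paper: Hodge decomposition to discard the solenoidal (heat-equation) parts, the same quartic Fourier symbol, the observation that $\theta$ is the positive root of the $r$-independent quadratic $Q$ so that the characteristic polynomial is positive at $\lambda=\theta$ and negative at $\lambda=0$, a positive real root $\lambda_+(r)\in(0,\theta)$ tending to $\theta$ as $r\to\infty$, and a growing mode built from an explicit eigenvector times a radial cutoff supported on a high-frequency annulus, with Plancherel giving \eqref{1.30}. The only point worth tightening is that the intermediate value theorem alone gives existence, not uniqueness, of the positive root; uniqueness follows from the strict monotonicity of $P(\cdot,r)$ on $(0,\infty)$, which is the monotonicity fact the paper invokes (and which is all that is actually needed for the construction).
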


\smallskip

\begin{remark} For any $\epsilon>0$ which may be small enough, it is direct to check that $(\epsilon\tilde n^+,\epsilon\tilde u^+,\epsilon\tilde n^-,\epsilon\tilde u^-)$ is still a solution of system \eqref{1.28}. This solution is obvious unstable due to \eqref{1.29} and \eqref{1.30}.
\end{remark}

\smallskip

The second result is  concerned with nonlinear instability, which is stated in the following theorem.

\smallskip

\begin{Theorem}[Nonlinear instability]\label{3mainth} The steady state $(0, \overrightarrow{0}, 0, \overrightarrow{0})$ of the system \eqref {1.25} is unstable in the Hadamard sense, that is, there exist  positive constants $\theta$, $\vartheta$, $\epsilon_0$ and $\delta_0$, and functions $( \tilde n_{0,\vartheta}^+,\tilde u_{0,\vartheta}^+,\tilde n_{0,\vartheta}^-,\tilde u_{0,\vartheta}^-)\in H^4(\mathbb R^3)$, such that for any $\epsilon\in(0,\epsilon_0)$ and the initial data
\begin{equation}\label{1.32}(  n_0^+, u_0^+, n_0^-,u_0^-)\triangleq\epsilon( \tilde n_{0,\vartheta}^+,\tilde u_{0,\vartheta}^+,\tilde n_{0,\vartheta}^-,\tilde u_{0,\vartheta}^-),\end{equation}
the Cauchy problem \eqref{1.25} and \eqref{1.32} admits a unique strong solution satisfying
	$$ n^\pm \in C^0(0, T^{\max};  H^{4}(\mathbb{R}^3))\cap C^1(0, T^{\max}; H^{3}(\mathbb{R}^3))\quad \text{and}\quad  u^\pm\in C^0(0, T^{\max}; H^{4}(\mathbb{R}^3))\cap C^1(0, T^{\max}; H^{2}(\mathbb{R}^3)),$$
and
\begin{equation}\label{1.33}\left\| (n^+,u^+,n^-,u^-)(T^\varepsilon)\right\|_{H^4}\ge \delta_0.
\end{equation}
for some escape time $T^\varepsilon\in [0,T^{\max})$, where $T^{\max}$ denotes the maximal time of existence of the solution.
\end{Theorem}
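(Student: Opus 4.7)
The plan is to derive the nonlinear instability from the linear instability (Theorem \ref{2mainth}) via a Guo--Strauss type bootstrap. Fix $\vartheta \in (0, \theta)$ small and apply Theorem \ref{2mainth} to obtain initial data $(\tilde n^\pm_{0,\vartheta}, \tilde u^\pm_{0,\vartheta}) \in H^4(\mathbb R^3)$ (the construction in that theorem being compatible with arbitrary Sobolev regularity) together with the associated linear solution $(\tilde n^\pm_\vartheta, \tilde u^\pm_\vartheta)$ obeying the two-sided bound \eqref{1.30}. For $\epsilon \in (0, \epsilon_0)$, a standard local well-posedness result for \eqref{1.25} with initial data $\epsilon(\tilde n^\pm_{0,\vartheta}, \tilde u^\pm_{0,\vartheta})$ furnishes a unique strong solution $(n^\pm, u^\pm)$ on a maximal interval $[0, T^{\max})$ in the regularity class claimed in the theorem.

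Decompose $(n^\pm, u^\pm) = \epsilon(\tilde n^\pm_\vartheta, \tilde u^\pm_\vartheta) + (N^\pm, U^\pm)$, so that the remainder $(N^\pm, U^\pm)$ solves \eqref{1.28} with zero initial data and inhomogeneity $(\mathcal F_1, \mathcal F_2, \mathcal F_3, \mathcal F_4)$ evaluated at $(n^\pm, u^\pm)$. Letting $S(t)$ denote the semigroup generated by the left-hand side of \eqref{1.28}, Duhamel's formula reads $(N^\pm, U^\pm)(t) = \int_0^t S(t-s)(\mathcal F_1, \mathcal F_2, \mathcal F_3, \mathcal F_4)(s)\, ds$. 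The Green's function analysis underlying Theorem \ref{2mainth} upgrades, by Fourier-multiplier bounds on the symbol, to the operator-norm estimate $\|S(t)\|_{H^k \to H^k} \le C e^{\theta t}$, while Moser's product inequalities combined with the embedding $H^2 \hookrightarrow L^\infty$ applied to the quadratic-and-higher nonlinearities \eqref{1.16}--\eqref{1.19} give $\|(\mathcal F_1, \mathcal F_2, \mathcal F_3, \mathcal F_4)(s)\|_{L^2} \le C \|(n^\pm, u^\pm)(s)\|_{H^4}^2$ as long as $\|(n^\pm, u^\pm)(s)\|_{H^4}$ remains bounded by a small absolute constant. Define the escape time
\[
T^\varepsilon := \sup\LCB t \in [0, T^{\max}) \,:\, \|(n^+, u^+, n^-, u^-)(\tau)\|_{H^4} \le \delta_0 \text{ for all } \tau \in [0, t]\RCB.
\]
On $[0, T^\varepsilon)$, a bootstrap in Duhamel's formula yields $\|(N^\pm, U^\pm)(t)\|_{L^2} \le C \epsilon^2 e^{2\theta t}$, while the lower bound in \eqref{1.30} supplies $\|\epsilon \tilde n^\pm_\vartheta(t)\|_{L^2} \ge \epsilon c_0 e^{(\theta - \vartheta) t}$ with $c_0 := \|\tilde n^\pm_{0,\vartheta}\|_{L^2}$. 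Selecting $\vartheta \ll \theta$ and then $\delta_0, \epsilon_0$ small enough that the linear lower bound overtakes the nonlinear remainder before the escape time is reached, the triangle inequality then forces $\|(n^\pm, u^\pm)(T^\varepsilon)\|_{H^4} \ge \delta_0$, proving \eqref{1.33}.

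The principal technical difficulties are two. The first is a derivative-loss: $\mathcal F_2$ and $\mathcal F_4$ contain terms of the form $l_\pm(n)\,\Delta u$ that carry two derivatives on the velocity, so a direct $H^k$ nonlinear estimate with $k \ge 3$ costs a derivative that local existence does not supply. I would circumvent this by carrying out the Duhamel iteration in $L^2$, where the quadratic structure closes via the schematic inequality $\|l_\pm(n)\,\Delta u\|_{L^2} \le C \|n\|_{L^\infty}\|u\|_{H^2}$, while separately controlling the full $H^4$ norm of the remainder through energy estimates on the perturbation equations that exploit the parabolic smoothing in the momentum equations. The second difficulty is the operator-norm semigroup bound $\|S(t)\|_{H^k \to H^k} \lesssim e^{\theta t}$ despite the zero eigenvalue of the linearized operator coming from the mass block; this forces a Hodge decomposition of the velocity coupled with the frequency-localized Green's function analysis already developed for Theorem \ref{2mainth}, together with the verification that $\theta$ uniformly dominates the semigroup growth on every frequency regime. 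The exponent balance in the bootstrap, reconciling the rates $(\theta - \vartheta)$, $\theta$, and $2\theta$, then dictates the hierarchical choice $\vartheta \ll \theta$, $\delta_0 \ll 1$, $\epsilon_0 \ll \delta_0$.
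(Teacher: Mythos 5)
Your skeleton (linear solution from Theorem \ref{2mainth}, Duhamel for the remainder, the $L^2\to L^2$ semigroup bound $\lesssim e^{\theta t}$, an escape-time/bootstrap argument) matches the paper's, but the central quantitative step does not close as you have written it. You claim the bootstrap yields $\|(N^\pm,U^\pm)(t)\|_{L^2}\le C\epsilon^2e^{2\theta t}$. To get a factor $\epsilon^2 e^{2\theta t}$ out of Duhamel you would need $\|\mathcal F(s)\|_{L^2}\lesssim(\epsilon e^{\theta s})^2$, i.e.\ a bound quadratic in a quantity that is itself of size $\epsilon e^{\theta s}$. But the only norm you control at that size is the $L^2$ norm of the solution; the nonlinearity contains $\nabla n^\pm$, $\nabla u^\pm$, $\nabla^2 u^\pm$, so at least one factor in each product must be measured in a derivative norm, and the only bound you have on derivative norms up to the escape time is $\|(n,u)\|_{H^4}\le\delta_0$ --- a constant, not $\epsilon e^{\theta s}$. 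Your own proposed fix, $\|l_\pm(n)\Delta u\|_{L^2}\le C\|n\|_{L^\infty}\|u\|_{H^2}$, therefore gives only $\|\mathcal F(s)\|_{L^2}\lesssim\delta_0^2$, hence $\|(N,U)(t)\|_{L^2}\lesssim\delta_0^2e^{\theta t}$; at the escape time, where $\epsilon e^{\theta T^\varepsilon}\sim\delta_0$, this is of size $\delta_0^3/\epsilon$, which is \emph{not} dominated by the linear part $\sim\delta_0$ as $\epsilon\to0$. So the triangle inequality does not force the conclusion.

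The missing ingredient is the paper's second bootstrap quantity: one postulates $\|(n,u)(\tau)\|_{L^2}\le\varepsilon\varepsilon_0^{-1/3}e^{\theta\tau}$ up to a time $T^{**}$, and then estimates $\|\mathcal F\|_{L^2}$ by products of the \emph{growing} $L^2$ norm with $\|\nabla(n,u)\|_{W^{1,\infty}}$ and $\|\nabla(n,u)\|_{L^4}^2$, each of which is interpolated (Gagliardo--Nirenberg) between the growing $L^2$ bound and the bounded $H^4$ norm, e.g.\ $\|\nabla f\|_{L^\infty}\lesssim\|f\|_{L^2}^{1/6}\|\nabla^3f\|_{L^2}^{5/6}$. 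This produces $\|\mathcal F(\tau)\|_{L^2}\lesssim(\varepsilon\varepsilon_0^{-1/3}e^{\theta\tau})^{1+\sigma}\varepsilon_0^{1-\sigma}$ with $\sigma\in\{1/6,1/8\}$: the exponent on the growing factor is only $1+\sigma$, not $2$, and closing the argument then hinges on the carefully chosen powers of $\varepsilon_0$ (the $\varepsilon_0^{-1/3}$ in the ansatz and the resulting gain $\varepsilon_0^{17/18}$) to show $T^{**}\ge T^\varepsilon$ by contradiction, followed by the $L^2$ lower bound at the explicit time $T^\varepsilon=\theta^{-1}\ln(2\varepsilon_0/\varepsilon)$ with $\vartheta=1/T^\varepsilon$. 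Without this auxiliary $L^2$ bootstrap and the interpolation mechanism, your claimed $\epsilon^2e^{2\theta t}$ remainder bound is unsupported and the proof has a genuine gap. (A minor additional point: $\vartheta$ cannot be fixed independently of $\epsilon$ in this scheme; it is chosen as $1/T^\varepsilon$, hence depends on $\epsilon$, which is consistent with the statement but not with your ``fix $\vartheta$ small'' at the outset.)
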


\smallskip

\begin{remark} Theorem \ref{2mainth} and Theorem \ref{3mainth} show that for the case $f'(1)>0$, the constant equilibrium state of the two--fluid model is linearly globally unstable and nonlinearly locally unstable
in the sense of Hadamard, which is in contrast to
the cases $f'(1)<0$ in Evje--Wang--Wen \cite{Evje9} and $P^+=P^-$ (corresponding to $f'(1)=0$) in Wu--Yao--Zhang \cite{WYZ} where the constant equilibrium state of the two--fluid model \eqref{1.5} is nonlinearly globally stable.
\end{remark}

Now, let us sketch the main ideas in the proofs of Theorem \ref{2mainth} and Theorem \ref{3mainth}.
For the proof of Theorem \ref{2mainth}, we need construct a solution to the linearized system \eqref{1.28} that has a growing $H^k$
norm for any $k$ and the proof can be outlined as follows.
First, we exclude the stabilizing part of the linearized system by employing the Hodge decomposition technique firstly
introduced by Danchin \cite{Dan1} to split the linearized system into three systems (see \eqref {2.1} and \eqref {2.2} for details). One is a $4\times 4$ system and
its characteristic polynomial possesses four distinct roots, the other two systems are the heat equation.
This key observation allows us to construct an unstable solution.
Second, we assume a growing mode ansatz, i.e., $$\widehat{\tilde{n}^+}=\text{e}^{\lambda(|\xi|)t}\widehat{\tilde{n}^+_0},\
\widehat{\tilde{\varphi}^+}=\text{e}^{\lambda(|\xi|)t}\widehat{\tilde{\varphi}^+_0},\
\widehat{\tilde{n}^-}=\text{e}^{\lambda(|\xi|)t}\widehat{\tilde{n}^-_0},\
\widehat{\tilde{\varphi}^-}=\text{e}^{\lambda(|\xi|)t}\widehat{\tilde{\varphi}^-_0}, ~~\hbox{for some}~\lambda,$$
and submit this ansatz into the Fourier transformation of the $4\times 4$ system to get
a time--independent system for $\lambda$.
Third, we solve the time--independent system by making careful analysis and using several key observations. Indeed, noticing that
the characteristic polynomial $F(\lambda)$ defined in \eqref{2.6} is a strictly increasing function on $(0, \infty)$, and
$F(\theta)>0$ for $\theta>0$ defined in Theorem \ref{2mainth}, we show that $0<\lambda_1<\theta$ is the unique positive root of the characteristic equation $F(\lambda)=0$, and $\theta>0$ in Theorem \ref{2mainth} is the largest possible growth rate since $Re(\lambda_i)\leq \theta$ with $1\leq i\leq 4$. Therefore, the growing mode constructed in Theorem \ref{2mainth} actually does grow in time at
the fastest possible rate.\par
For the proof of Theorem \ref{3mainth}, we deduce the nonlinear instability. Compared to \cite{Guo1,Jang,Jiang1,Jiang2,WangT} where nonlinear energy estimates and a careful bootstrap argument are employed to prove stability and instability, we need to develop new
ingredients in the proof to handle with the difficulties arising from the strong interaction of two fluids,
which requires some new thoughts. Indeed, since the strong
coupling terms are involved in the right--hand of the system \eqref{1.25}, it seems impossible to follow the energy methods of \cite{Guo1,Jang,Jiang1,Jiang2,WangT}
to get the lyapunov--type inequality: $\frac{d}{dt}\mathcal{E}(t)\leq \theta \mathcal{E}(t)$ to prove the largest possible growth rate.
Therefore,  we must pursue another route by resorting to semigroup methods to capture the largest possible growth rate, but the cost is that
we need the higher regularity of the solutions. More precisely,
with the help of the global linear instability result of Theorem \ref{2mainth} and a local existence theorem of classical solutions to the original nonlinear system, we can make delicate spectral analysis for the linearized system and apply Duhamel's principle to prove the nonlinear instability
result stated in Theorem \ref{2mainth}.
\subsection{Notations and conventions.}
Throughout this paper, we denote $H^k(\mathbb R^3)$ by the usual Sobolev spaces with norm $\|\cdot\|_{H^k}$ and
denote $L^p$, $1\leq p\leq \infty$ by the usual $L^p(\mathbb R^3)$
spaces with norm $\|\cdot\|_{L^p}$. We drop the domain $\mathbb R^3$ in integrands over $\mathbb R^3$. For the sake of conciseness, we
do not precise in functional space names when they are concerned
with scalar--valued or vector--valued functions, $\|(f, g)\|_X$
denotes $\|f\|_X+\|g\|_X$.  We will employ the notation $a\lesssim
b$ to mean that $a\leq Cb$ for a universal constant $C>0$ that only
depends on the parameters coming from the problem. We denote
$\nabla=\partial_x=(\partial_1,\partial_2,\partial_3)$, where
$\partial_i=\partial_{x_i}$, $\nabla_i=\partial_i$ and put
$\partial_x^\ell f=\nabla^\ell f=\nabla(\nabla^{\ell-1}f)$.  Let
$\Lambda^s$ be the pseudo differential operator defined by
\begin{equation}\Lambda^sf=\mathfrak{F}^{-1}(|{\bf \xi}|^s\widehat f),~\hbox{for}~s\in \mathbb{R},\nonumber\end{equation}
where $\widehat f$ and $\mathfrak{F}(f)$ are the Fourier transform
of $f$.

\smallskip

\bigskip
\section{\leftline {\bf{Linear instability.}}}
\setcounter{equation}{0}

To construct a solution to the linearized system \eqref{1.28} that has growing $H^k$--norm for any positive integer $k$, by using a real method
as in \cite{Kowalczyk}, one need to make a detailed
analysis on the properties of the semigroup. To exclude the stabilizing part, we will employ the Hodge decomposition technique firstly
introduced by Danchin \cite{Dan1} to split the linear system into three systems. One only has four
equations and its characteristic polynomial possesses four distinct roots, the other two systems
are the heat equation. This key observation allows us to construct a unstable solution.
To see this, let
$\varphi^{\pm}=\Lambda^{-1}{\rm div}\tilde{u}^{\pm}$
be the ``compressible part" of the velocities $\tilde{u}^{\pm}$, and denote $\phi^{\pm}=\Lambda^{-1}{\rm curl}\tilde{u}^{\pm}$ (with $({\rm curl} z)_i^j
=\partial_{x_j}z^i-\partial_{x_i}z^j$) by the ``incompressible part"
of the velocities $\tilde{u}^{\pm}$. Setting  $\nu^{\pm}=\nu^{\pm}_1+\nu^{\pm}_2$, the system \eqref{1.28} can be
decomposed into
the following three systems:
\begin{equation}\label{2.1}
\begin{cases}
\partial_t{\tilde{n}^+}+\beta_1\Lambda{\varphi^+}=0,\\
\partial_t{\varphi^+}-\beta_1\Lambda{\tilde{n}^+}-\beta_2\Lambda{\tilde{n}^-}+\nu^+\Lambda^2{\varphi^+}=0,\\
\partial_t{\tilde{n}^-}+\beta_4\Lambda{\varphi^-}=0,\\
\partial_t{\varphi^-}-\beta_3\Lambda{\tilde{n}^+}-\beta_4\Lambda{\tilde{n}^-}+\nu^-\Lambda^2{\varphi^-}=0,\\
\end{cases}
\end{equation}
and
\begin{equation}\label{2.2}
\begin{cases}
\partial_t\phi^++\nu^+_1\Lambda^2\phi^+=0,\\
\partial_t\phi^-+\nu^-_1\Lambda^2\phi^-=0.
\end{cases}
\end{equation}
We see that Eqs. \eqref{2.2}$_1$ and \eqref{2.2}$_2$ are the standard parabolic equations with good stability. Thus, the onset of instabilities of system \eqref{1.28} comes from \eqref{2.1}. Taking the Fourier transform to the system \eqref{2.1}, one has
\begin{equation}\label{2.3}
\begin{cases}
\partial_t\widehat{\tilde{n}^+}+\beta_1|\xi|\widehat{\varphi^+}=0,\\
\partial_t\widehat{\varphi^+}-\beta_1|\xi|\widehat{\tilde{n}^+}-\beta_2|\xi|\widehat{\tilde{n}^-}+\nu^+|\xi|^2\widehat{\varphi^+}=0,\\
\partial_t\widehat{\tilde{n}^-}+\beta_4|\xi|\widehat{\varphi^-}=0,\\
\partial_t\widehat{\varphi^-}-\beta_3|\xi|\widehat{\tilde{n}^+}-\beta_4|\xi|\widehat{\tilde{n}^-}+\nu^-|\xi|^2\widehat{\varphi^-}=0.\\
\end{cases}
\end{equation}
To construct a solution to the linearized equations \eqref{2.3} that has growing $H^k$--norm for any $k$, we shall make a growing normal mode ansatz of solutions, i.e.,
$$\widehat{\tilde{n}^+}=\text{e}^{\lambda(|\xi|)t}\widehat{\tilde{n}^+_0},\
\widehat{\tilde{\varphi}^+}=\text{e}^{\lambda(|\xi|)t}\widehat{\tilde{\varphi}^+_0},\
\widehat{\tilde{n}^-}=\text{e}^{\lambda(|\xi|)t}\widehat{\tilde{n}^-_0},\
\widehat{\tilde{\varphi}^-}=\text{e}^{\lambda(|\xi|)t}\widehat{\tilde{\varphi}^-_0}.$$
Substituting this ansazt into \eqref{2.3}, one obtains the time--independent system
\begin{equation}\label{2.4}
\begin{cases}
\lambda{\widehat{\tilde{n}^+_0}}+\beta_1|\xi|\widehat{{\varphi^+_0}}=0,\\
\lambda\widehat{{\varphi^+_0}}-\beta_1|\xi|\widehat{{\tilde{n}^+_0}}-\beta_2|\xi|\widehat{{\tilde{n}^-_0}}+\nu^+|\xi|^2\widehat{{\varphi^+_0}}=0,\\
\lambda\widehat{{\tilde{n}^-_0}}+\beta_4|\xi|\widehat{{\varphi^-_0}}=0,\\
\lambda\widehat{{\varphi^-_0}}-\beta_3|\xi|\widehat{{\tilde{n}^+_0}}-\beta_4|\xi|\widehat{{\tilde{n}^-_0}}+\nu^-|\xi|^2\widehat{{\varphi^-_0}}=0.\\
\end{cases}
\end{equation}
After a series of tedious but direct calculations, we can conclude from \eqref{2.4} that
\begin{equation}\begin{split}\label{2.5}
&[\lambda^4+(\nu^+|\xi|^2+\nu^-|\xi|^2)\lambda^3+(\beta_1^2|\xi|^2+\beta_4^2|\xi|^2+\nu^+\nu^-|\xi|^4)\lambda^2\\&+(\nu^+\beta_4^2|\xi|^4+\nu^-\beta_1^2|\xi|^4)\lambda
+\beta_1^2\beta_4^2|\xi|^4-\beta_1\beta_2\beta_3\beta_4|\xi|^4]\widehat{{\varphi^-_0}}=0.
\end{split}\end{equation}
Therefore, the system \eqref{2.4} has non--zero solutions if the characteristic equation
\begin{equation}\begin{split}\label{2.6}
F(\lambda)=&\lambda^4+(\nu^+|\xi|^2+\nu^-|\xi|^2)\lambda^3+(\beta_1^2|\xi|^2+\beta_4^2|\xi|^2+\nu^+\nu^-|\xi|^4)\lambda^2\\&+(\nu^+\beta_4^2|\xi|^4+\nu^-\beta_1^2|\xi|^4)\lambda
+\beta_1^2\beta_4^2|\xi|^4-\beta_1\beta_2\beta_3\beta_4|\xi|^4=0
\end{split}\end{equation}
has a real characteristic root.
\begin{Lemma}\label{lemma2.1}
There exists a positive constant $\eta_1\gg 1 $, such that for
$|\xi|\ge \eta_1$, the characteristic equation \eqref{2.6} admits a real positive solution satisfying  the following Taylor series
expansion
\begin{equation}\lambda_1=\theta+\mathcal {O}(|\xi|^{-1}).\label{2.7}\end{equation}
Moreover, the following estimate holds
\begin{equation}\lambda_1<\theta \quad \text{for any}\quad \xi\in\mathbb R^3.\label{2.8}\end{equation}
\end{Lemma}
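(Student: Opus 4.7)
The plan is to treat $F(\lambda)$ as a quartic in $\lambda$ with $|\xi|$ as parameter, and to show in turn: (i) $F$ is strictly monotone on $[0,\infty)$, (ii) $F(0)<0$ and $F(\theta)>0$, so the unique positive root $\lambda_{1}$ lies in $(0,\theta)$, and (iii) the gap $\theta-\lambda_{1}$ is $\mathcal{O}(|\xi|^{-2})$ for large $|\xi|$.

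For (i), termwise differentiation gives
\begin{equation*}
F'(\lambda)=4\lambda^{3}+3(\nu^{+}+\nu^{-})|\xi|^{2}\lambda^{2}+2\bigl((\beta_{1}^{2}+\beta_{4}^{2})|\xi|^{2}+\nu^{+}\nu^{-}|\xi|^{4}\bigr)\lambda+(\nu^{+}\beta_{4}^{2}+\nu^{-}\beta_{1}^{2})|\xi|^{4},
\end{equation*}
which is a sum of non-negative terms for $\lambda\ge 0$ and is bounded below by $(\nu^{+}\beta_{4}^{2}+\nu^{-}\beta_{1}^{2})|\xi|^{4}$; hence at most one positive root can exist. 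For (ii), from \eqref{1.27} we read off $\beta_{1}\beta_{4}<\beta_{2}\beta_{3}$, so $F(0)=(\beta_{1}^{2}\beta_{4}^{2}-\beta_{1}\beta_{2}\beta_{3}\beta_{4})|\xi|^{4}<0$ whenever $\xi\neq 0$. The crucial observation is that $\theta$ is, by construction, the positive root of
\begin{equation*}
\nu^{+}\nu^{-}z^{2}+(\nu^{+}\beta_{4}^{2}+\nu^{-}\beta_{1}^{2})z+\bigl(\beta_{1}^{2}\beta_{4}^{2}-\beta_{1}\beta_{2}\beta_{3}\beta_{4}\bigr)=0,
\end{equation*}
so multiplying this identity by $|\xi|^{4}$ and substituting into $F(\theta)$ collapses the entire $|\xi|^{4}$-block, leaving
\begin{equation*}
F(\theta)=\theta^{2}\Bigl[\theta^{2}+(\nu^{+}+\nu^{-})|\xi|^{2}\theta+(\beta_{1}^{2}+\beta_{4}^{2})|\xi|^{2}\Bigr]>0.
\end{equation*}
Together with (i), the intermediate value theorem yields a unique positive root $\lambda_{1}\in(0,\theta)$, which is exactly \eqref{2.8}.

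For (iii), I would apply the mean value theorem on $[\lambda_{1},\theta]$: some $\zeta\in(\lambda_{1},\theta)$ satisfies $F(\theta)=F'(\zeta)(\theta-\lambda_{1})$. The formula just derived shows $F(\theta)=\mathcal{O}(|\xi|^{2})$ as $|\xi|\to\infty$, while the uniform lower bound from (i) gives $F'(\zeta)\ge(\nu^{+}\beta_{4}^{2}+\nu^{-}\beta_{1}^{2})|\xi|^{4}$; dividing yields $\theta-\lambda_{1}=\mathcal{O}(|\xi|^{-2})$, which in particular implies $\lambda_{1}=\theta+\mathcal{O}(|\xi|^{-1})$ as claimed. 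Choosing $\eta_{1}$ large simply guarantees that the remainder is as small as needed.

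The step I expect to be the main obstacle is the cancellation producing $F(\theta)>0$: it is not at all visible from the coefficient layout of $F$ and depends entirely on exploiting the defining quadratic for $\theta$ to eliminate the $|\xi|^{4}$-block of $F(\theta)$. Once this identity is in hand, both the strict upper bound $\lambda_{1}<\theta$ and the $\mathcal{O}(|\xi|^{-1})$ convergence rate follow from elementary calculus.
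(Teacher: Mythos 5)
Your proposal is correct, and for the key inequality \eqref{2.8} it is exactly the paper's argument: the authors also observe that $F$ is strictly increasing on $(0,\infty)$ and that $F(\theta)$ exceeds $\bigl[\nu^{+}\nu^{-}\theta^{2}+(\nu^{+}\beta_{4}^{2}+\nu^{-}\beta_{1}^{2})\theta+\beta_{1}^{2}\beta_{4}^{2}-\beta_{1}\beta_{2}\beta_{3}\beta_{4}\bigr]|\xi|^{4}=0$, i.e.\ precisely the cancellation you single out as the crucial step, with $\theta$ being the positive root of the defining quadratic. Where you differ is in the expansion \eqref{2.7}: the paper simply invokes the Taylor-series method of Matsumura--Nishida and omits the ``tedious but direct calculations,'' whereas your mean-value-theorem argument ($\theta-\lambda_{1}=F(\theta)/F'(\zeta)$ with $F(\theta)=\mathcal{O}(|\xi|^{2})$ and $F'(\zeta)\ge(\nu^{+}\beta_{4}^{2}+\nu^{-}\beta_{1}^{2})|\xi|^{4}$) is self-contained, elementary, and in fact delivers the sharper rate $\theta-\lambda_{1}=\mathcal{O}(|\xi|^{-2})$; you also supply the existence of the positive root via $F(0)<0$ and the intermediate value theorem, which the paper leaves implicit. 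Both routes are sound; yours fills in the details the paper skips.
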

\begin{proof}Employing
the similar argument of Taylor series expansion as in
\cite{Mat1}, then \eqref{2.7} follows from some tedious but direct calculations. It is noticed that
$F(\lambda)$ is a strictly monotonically increasing function if $\lambda>0$. Furthermore,
\begin{equation}\nonumber\begin{split}F(\theta)>\nu^+\nu^-|\xi|^4\theta^2+(\nu^+\beta_4^2|\xi|^4+\nu^-\beta_1^2|\xi|^4)\theta
+\beta_1^2\beta_4^2|\xi|^4-\beta_1\beta_2\beta_3\beta_4|\xi|^4=0,\end{split}\end{equation}
therefore \eqref{2.8} holds and the proof of lemma is completed.
\end{proof}

\smallskip

 Let $\phi\in C_0^\infty(\mathbb
R^3_{{\bf \xi}})$ be a radial function satisfying $\phi({\bf \xi})=1$ when $\frac{3}{2}\eta\le |{\bf
\xi}|\leq 3\eta$ and $\phi({\bf \xi})=0$ when $|{\bf
\xi}|\le \eta$ and $|{\bf
\xi}|\ge 4\eta$. From \eqref{2.4},  we set
$${\widehat{\tilde{n}^+_0}}=\phi({\bf \xi}),\ \widehat{{\varphi^+_0}}=-\frac{\lambda_1(|\xi|)}{\beta_1|\xi|}\phi({\bf \xi}),\ {\widehat{\tilde{n}^-_0}}=-\frac{\lambda^2_1(|\xi|)+\beta_1^2|\xi|^2+\nu^+\lambda_1(|\xi|)|\xi|^2}{\beta_1\beta_2|\xi|^2}\phi({\bf \xi})$$
and
$$\widehat{{\varphi^-_0}}=\frac{\lambda^3_1(|\xi|)+\beta_1^2\lambda_1(|\xi|)|\xi|^2+\nu^+\lambda^2_1(|\xi|)|\xi|^2}{\beta_1\beta_2\beta_4|\xi|^3}\phi({\bf \xi}).$$
Then, it is direct to check that $(\widehat{\tilde{n}^+_0},\widehat{\tilde\varphi^+_0},\widehat{\tilde{n}^-_0},\widehat{\tilde\varphi^-_0})$ is a solution of the system \eqref{2.4}. Thus,  we conclude the following proposition, which implies Theorem \ref{2mainth}.

\begin{Proposition}\label{Prop2.2} Let
$${\tilde{n}^\pm}=\mathfrak{F}^{-1}\left(\text{e}^{\lambda_1t}\widehat{\tilde{n}^\pm_0}\right)\quad \text{and}\quad
{\tilde{u}^\pm}=-\Lambda^{-1}\nabla\mathfrak{F}^{-1}\left(\text{e}^{\lambda_1(|\xi|)t}\widehat{\tilde{\varphi}^\pm_0}\right).$$
Then $(\tilde{n}^+,\tilde u^+,\tilde{n}^-,\tilde u^-)$ is a solution of \eqref{1.29} and
satisfies
\begin{equation}\text{e}^{(\theta-\vartheta) t} \|\tilde{n}^\pm_0\|_{L^2}\le \|\tilde{n}^\pm(t)\|_{L^2}\le \text{e}^{\theta t}\|\tilde{n}^\pm_0\|_{L^2} \quad \text{and}\quad \text{e}^{(\theta-\vartheta) t}\|\tilde{u}^\pm_0\|_{L^2}\le\|\tilde{u}^\pm(t)\|_{L^2}\le\text{e}^{\theta t} \|\tilde{u}^\pm_0\|_{L^2},\label{2.9}\end{equation}
if $\eta_1$ large enough.
\end{Proposition}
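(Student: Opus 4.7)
The plan is to verify Proposition \ref{Prop2.2} in two stages: first confirming that the quadruple defined via inverse Fourier transform actually solves the linearized system \eqref{1.28}, and then extracting the two-sided growth bounds \eqref{2.9} by a frequency-localized application of Plancherel's theorem.

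First I would observe that by construction the vector $(\widehat{\tilde n^+_0},\widehat{\varphi^+_0},\widehat{\tilde n^-_0},\widehat{\varphi^-_0})$ satisfies the algebraic system \eqref{2.4} pointwise in $\xi$, since the coefficients were engineered precisely so that $\lambda_1(|\xi|)$ (a root of $F$ provided by Lemma \ref{lemma2.1}) makes \eqref{2.5} vanish. Multiplying by $\mathrm{e}^{\lambda_1(|\xi|)t}$ and differentiating in $t$ shows that $\mathrm{e}^{\lambda_1(|\xi|)t}\widehat{\tilde n^\pm_0}$ and $\mathrm{e}^{\lambda_1(|\xi|)t}\widehat{\varphi^\pm_0}$ solve the Fourier-side system \eqref{2.3}; inverting the Fourier transform recovers a solution of \eqref{2.1}. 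Because $\tilde u^\pm=-\Lambda^{-1}\nabla\mathfrak F^{-1}(\mathrm{e}^{\lambda_1 t}\widehat{\varphi^\pm_0})$ is a pure gradient, its curl vanishes, so the incompressible parts $\phi^\pm$ are identically zero and trivially solve \eqref{2.2}. Reassembling through the Hodge decomposition produces a genuine solution of the full linearized system \eqref{1.28}.

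For the $L^2$ estimates, Plancherel's identity gives
\[
\|\tilde n^\pm(t)\|_{L^2}^2 = \int_{\mathbb R^3} \mathrm{e}^{2\lambda_1(|\xi|)t}\,|\widehat{\tilde n^\pm_0}(\xi)|^2\,d\xi,
\]
and by construction $\widehat{\tilde n^\pm_0}$ is supported in the annulus $\{\eta\le|\xi|\le 4\eta\}$. Taking $\eta\ge \eta_1$ allows me to invoke Lemma \ref{lemma2.1} throughout the support. The upper bound in \eqref{2.9} is then immediate from \eqref{2.8}, since $\lambda_1(|\xi|)<\theta$ forces $\mathrm{e}^{2\lambda_1 t}\le \mathrm{e}^{2\theta t}$ on that support. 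For the lower bound I would use the Taylor expansion \eqref{2.7}, namely $\lambda_1(|\xi|)=\theta+\mathcal O(|\xi|^{-1})$, to select $\eta=\eta(\vartheta)$ large enough that $\lambda_1(|\xi|)\ge \theta-\vartheta$ uniformly on $\{|\xi|\ge \eta\}$, whence $\mathrm{e}^{2\lambda_1 t}\ge \mathrm{e}^{2(\theta-\vartheta)t}$ on the support and the lower estimate follows.

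The estimate for $\tilde u^\pm$ reduces to the same computation: since $\widehat{\tilde u^\pm}(\xi,t)=-\mathrm{i}\tfrac{\xi}{|\xi|}\mathrm{e}^{\lambda_1(|\xi|)t}\widehat{\varphi^\pm_0}(\xi)$, the symbol $-\mathrm{i}\xi/|\xi|$ has unit modulus, so Plancherel yields
\[
\|\tilde u^\pm(t)\|_{L^2}^2 = \int_{\mathbb R^3}\mathrm{e}^{2\lambda_1(|\xi|)t}\,|\widehat{\varphi^\pm_0}(\xi)|^2\,d\xi,
\]
and the annulus argument above goes through verbatim. The one point that requires genuine care is the quantitative selection of $\eta$ depending on $\vartheta$: one must verify that the remainder in \eqref{2.7} is effectively controlled, which can be done by implicitly differentiating $F(\lambda_1(|\xi|))=0$ in $|\xi|$ and noting that the leading-order balance in $F$ for large $|\xi|$ forces $\lambda_1$ to approach $\theta$ at rate $|\xi|^{-1}$. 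Aside from this calibration, the proof is a straightforward exercise in Plancherel and the monotonicity/expansion properties of $\lambda_1$ established in Lemma \ref{lemma2.1}.
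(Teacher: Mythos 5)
Your proposal is correct and follows essentially the same route as the paper: set the incompressible parts $\phi^\pm\equiv 0$, verify the compressible system \eqref{2.1} on the Fourier side using the construction from \eqref{2.4}, and obtain \eqref{2.9} via Plancherel on the annulus $\{\eta\le|\xi|\le 4\eta\}$, with the upper bound from \eqref{2.8} and the lower bound from the expansion \eqref{2.7} after taking $\eta$ large depending on $\vartheta$. Your write-up is in fact more explicit than the paper's (which compresses the solution-verification and the choice of $\eta$ into a sentence), but no new idea is involved.
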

\begin{proof} Set $\phi^\pm\equiv0$. As the definition of $\varphi^\pm$ and $\phi^\pm$, and the relation
$$\tilde u^\pm=-\Lambda^{-1}\nabla\varphi^\pm-\Lambda^{-1}\text{div}\phi^\pm,$$
it is easy to prove that $(\tilde{n}^+,\tilde u^+,\tilde{n}^-,\tilde u^-)$ is a solution of \eqref{1.29}. Moreover, in virtue of Plancherel theorem, we have
 \begin{equation}\begin{split}\|\tilde u^\pm(t)\|_{L^2}^2=&\|\widehat{\tilde u^\pm}(t)\|_{L^2}^2\\
 =&\int\text{e}^{2\lambda_1(|\xi|)t}|\widehat{\tilde u^\pm_0}|^2\mathrm{d}\xi\\
 =&\int_{\eta\le|\xi|\le 4|\eta|}\text{e}^{2\lambda_1(|\xi|)t}|\widehat{\tilde u^\pm_0}|^2\mathrm{d}\xi\\
 \ge &~\text{e}^{2(\theta-\vartheta)  t}\|\tilde u^\pm_0(t)\|_{L^2}^2,
 \end{split}\end{equation}
 if $\eta$ is large enough. Performing the similar procedures, we can prove $\|\tilde{u}^\pm(t)\|_{L^2}\le\text{e}^{\theta t} \|\tilde{u}^\pm_0\|_{L^2}$ and  $\text{e}^{(\theta-\vartheta)  t} \|\tilde{n}^\pm_0\|_{L^2}\le\|\tilde{n}^\pm(t)\|_{L^2}\le\text{e}^{\theta t} \|\tilde{n}^\pm_0\|_{L^2}$. The proof of proposition is complete.
\end{proof}

\bigskip

\section{Spectral analysis and linear $L^2$--estimates}\label{1section_appendix}

In this section, we are devoted to  deriving the linear $L^2$--estimates,  by using a real method
as in \cite{Mat1}, one need to make a detailed analysis on the
properties of the semigroup.

\subsection{Spectral analysis for system \eqref{2.1}} We consider the Cauchy problem of \eqref{2.1} with the initial data
 \begin{equation}\label{3.1}(\tilde{n}^+, \varphi^+, \tilde{n}^-, \varphi^-)\big|_{t=0}=({n}^+_0, \Lambda^{-1}{\rm div}\tilde{u}^{+}_0, {n}^-_0, \Lambda^{-1}{\rm div}\tilde{u}^{-}_0)(x)
 \end{equation}
 In terms of the semigroup theory, we may represent the IVP \eqref{2.1} and \eqref{3.1} for $\mathcal U=(\tilde{n}^+, \varphi^+, \tilde{n}^-, \varphi^-)^t$ as
\begin{equation}
\begin{cases}
\mathcal U_t=\mathcal B_1\mathcal U,\\
\mathcal U\big|_{t=0}=\mathcal U_0,
\end{cases}   \label{3.2}
\end{equation}
where the operator $\mathcal B_1$ is defined
by
\begin{equation}\nonumber\mathcal B_1=\begin{pmatrix}
0&-\beta_1\Lambda&0&0\\
\beta_1\Lambda&-\nu^+\Lambda^2&\beta_2\Lambda&0\\
0&0&0&-\beta_4\Lambda\\
\beta_3\Lambda&0&\beta_4\Lambda&-\nu^-\Lambda^2
\end{pmatrix}.\end{equation}
Taking the Fourier transform to the system \eqref{3.2}, we obtain
\begin{equation}
\begin{cases}
\widehat {\mathcal U}_t=\mathcal A_1(\xi)\widehat {\mathcal U},\\
\widehat {\mathcal U}\big|_{t=0}=\widehat {\mathcal U}_0,
\end{cases}   \label{3.3}
\end{equation}
where $\widehat {\mathcal U}(\xi,t)=\mathfrak{F}({\mathcal U}(x,t))$  and $\mathcal A_1(\xi)$ is given by
\begin{equation}\nonumber\mathcal A_1(\xi)=\begin{pmatrix}
0&-\beta|\xi|&0&0\\
\beta_1|\xi|&-\nu^+|\xi|^2&\beta_2|\xi|&0\\
0&0&0&-\beta_4|\xi|\\
\beta_3|\xi|&0&\beta_4|\xi|&-\nu^-|\xi|^2
\end{pmatrix}.\end{equation}
We compute the eigenvalues of the matrix $\mathcal A_1(\xi)$  from the determinant
\begin{equation}\begin{split}\label{3.4}&{\rm det}(\lambda{\rm I}-\mathcal A_1(\xi))\\
&=\lambda^4+(\nu^+|\xi|^2+\nu^-|\xi|^2)\lambda^3+(\beta_1^2|\xi|^2+\beta_4^2|\xi|^2+\nu^+\nu^-|\xi|^4)\lambda^2+(\nu^+\beta_4^2|\xi|^4+\nu^-\beta_1^2|\xi|^4)\lambda\\
&\quad+\beta_1^2\beta_4^2|\xi|^4-\beta_1\beta_2\beta_3\beta_4|\xi|^4\\
&=0,
\end{split}\end{equation}
which is the same as characteristic equation \eqref{2.6} and implies that the matrix $\mathcal A_1(\xi)$ possesses four different
eigenvalues:
\begin{equation}\nonumber
 \lambda_1=\lambda_1(|\xi|),\quad \lambda_2=\lambda_2(|\xi|),\quad
 \lambda_3=\lambda_3(|\xi|),\quad \lambda_4=\lambda_4(|\xi|).
\end{equation}
Consequently, the semigroup $e^{t\mathcal A_1}$ can be decomposed
into
\begin{equation}\label{2.21}
 \text{e}^{t\mathcal A_1(\xi)}=\sum_{i=1}^4\text{e}^{\lambda_it}P_i(\xi),
\end{equation}
where the projector $P_i(\xi)$ is defined by
\begin{equation}\label{2.22}
 P_i(\xi)=\prod_{j\neq i}\frac{\mathcal A_1(\xi)-\lambda_jI}{\lambda_i-\lambda_j}, \quad i,j=1,2,3,4.
\end{equation}
Thus, the solution of IVP \eqref{3.3} can be expressed as
\begin{equation}
\widehat {\mathcal U}(\xi,t)=\text{e}^{t\mathcal A_1(\xi)}\widehat {\mathcal U}_0(\xi)=\left(\sum_{i=1}^4
\text{e}^{\lambda_it}P_i(\xi)\right)\widehat {\mathcal U}_0(\xi).\label{2.23}
\end{equation}

To derive long time properties of  the semigroup
$\text{e}^{t\mathcal A_1}$ in $L^2$--framework, one need to analyze
the asymptotical expansions of $\lambda_i$, $P_i$ $(i =1, 2, 3, 4)$
and $\text{e}^{t\mathcal A_1(\xi)}$. Employing the similar argument of Taylor series expansion as in
\cite{Mat1}, we have the following lemmas from tedious
calculations.

\begin{Lemma}\label{lemma3.1}
There exists a positive constant $\eta_2\ll 1 $ such that, for
$|\xi|\leq \eta_2$, the spectral has the following Taylor series
expansion:
\begin{equation}\label{3.8-1}
\left\{\begin{array}{lll}\displaystyle \lambda_1=-\left[\frac{\nu^++\nu^-}{4}-\frac{\nu^+(\beta_1^2-\beta_4^2)+\nu^-(\beta_4^2-\beta_1^2)}{8\kappa_1}\right]|\xi|^2+\sqrt{\kappa_1-\kappa_2}|\xi|+\mathcal O(|\xi|^3),\\
\displaystyle\lambda_2=-\left[\frac{\nu^++\nu^-}{4}-\frac{\nu^+(\beta_1^2-\beta_4^2)+\nu^-(\beta_4^2-\beta_1^2)}{8\kappa_1}\right]|\xi|^2-\sqrt{\kappa_1-\kappa_2}|\xi|+\mathcal O(|\xi|^3),
\\ \displaystyle  \lambda_3=-\left[\frac{\nu^++\nu^-}{4}+\frac{\nu^+(\beta_1^2-\beta_4^2)+\nu^-(\beta_4^2-\beta_1^2)}{8\kappa_1}\right]|\xi|^2
+\sqrt{\kappa_2+\kappa_1}\text{i}|\xi|+\mathcal O(|\xi|^3),
\\ \displaystyle  \lambda_4=-\left[\frac{\nu^++\nu^-}{4}+\frac{\nu^+(\beta_1^2-\beta_4^2)+\nu^-(\beta_4^2-\beta_1^2)}{8\kappa_1}\right]|\xi|^2-\sqrt{\kappa_2+\kappa_1}\text{i}|\xi|+\mathcal O(|\xi|^3),
\end{array}\right.
\end{equation}
where $\kappa_1=\sqrt{\frac{(\beta_1^2-\beta_4^2)^2}{4}+\beta_1\beta_2\beta_3\beta_4}$ and $\displaystyle\kappa_2=\frac{\beta_1^2+\beta_4^2}{2}$.
\end{Lemma}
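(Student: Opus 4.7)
My plan is to perform a Taylor expansion of the four roots of the characteristic polynomial $F(\lambda)$ in \eqref{3.4} around $|\xi|=0$, following the standard perturbative approach used in \cite{Mat1}. The subtlety is that at $\xi=0$ the polynomial degenerates to $\lambda^4=0$, so all four eigenvalues coalesce at the origin and the implicit function theorem cannot be applied directly in the variable $\lambda$. The key device is the rescaling $\lambda = \mu |\xi|$: setting $z := |\xi|$, every monomial of $F(\lambda)$ carries at least $z^4$, and after dividing by $z^4$ one obtains
\begin{equation*}
\widetilde{F}(\mu,z) = \mu^4 + (\nu^++\nu^-)z^2\mu^3 + \bigl[(\beta_1^2+\beta_4^2)+\nu^+\nu^- z^2\bigr]\mu^2 + (\nu^+\beta_4^2+\nu^-\beta_1^2)z^2\mu + \bigl(\beta_1^2\beta_4^2-\beta_1\beta_2\beta_3\beta_4\bigr).
\end{equation*}
At $z=0$ this reduces to the biquadratic $\mu^4 + 2\kappa_2 \mu^2 + E = 0$ with $E := \beta_1^2\beta_4^2 - \beta_1\beta_2\beta_3\beta_4$, whose $\mu^2$--roots are $-\kappa_2 \pm \kappa_1$.

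From this biquadratic I would read off the leading-order behavior. The inequality \eqref{1.27} (which is precisely where $f'(1)>0$ enters) gives $\beta_1\beta_2\beta_3\beta_4 > \beta_1^2\beta_4^2$, so $E<0$; hence $\mu^2 = \kappa_1 - \kappa_2 > 0$ produces two real leading values $\mu_0 = \pm\sqrt{\kappa_1-\kappa_2}$ (which will become $\lambda_1, \lambda_2$), and $\mu^2 = -(\kappa_1+\kappa_2) < 0$ produces two purely imaginary leading values $\mu_0 = \pm i\sqrt{\kappa_1+\kappa_2}$ (which will become $\lambda_3, \lambda_4$). These four leading roots are distinct, so the analytic implicit function theorem applied to $\widetilde{F}(\mu,z)=0$ yields, for each $\mu_0$, an analytic branch $\mu(z) = \mu_0 + \alpha z + \mathcal{O}(z^2)$. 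Translating back gives $\lambda_i = \mu_0\,|\xi| + \alpha\,|\xi|^2 + \mathcal{O}(|\xi|^3)$, which is exactly the form of the claimed expansions.

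To identify the coefficient $\alpha$ explicitly, I would substitute $\lambda = \mu_0 z + \alpha z^2$ into $F(\lambda)=0$ and match the $z^5$ coefficient. After cancellation of the $z^4$ terms (which just recovers the leading biquadratic) and division by the nonzero $\mu_0$, this reduces to the scalar equation
\begin{equation*}
\bigl(4\mu_0^2 + 2(\beta_1^2+\beta_4^2)\bigr)\alpha = -(\nu^++\nu^-)\mu_0^2 - (\nu^+\beta_4^2+\nu^-\beta_1^2),
\end{equation*}
and the crucial algebraic simplification $4\mu_0^2 + 2(\beta_1^2+\beta_4^2) = \pm 4\kappa_1$ (with $+$ for the real family and $-$ for the imaginary family) makes the division immediate. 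Writing $\beta_1^2-\kappa_2 = (\beta_1^2-\beta_4^2)/2$ and $\beta_4^2-\kappa_2 = (\beta_4^2-\beta_1^2)/2$ to split the numerator into a part proportional to $\kappa_1$ and a residual recovers exactly the coefficients in \eqref{3.8-1}. The main obstacle is really just bookkeeping --- making sure the signs line up consistently across all four branches --- rather than any conceptual hurdle; once the rescaling $\lambda = \mu|\xi|$ is in place, the problem is reduced to a nondegenerate perturbation of an explicit biquadratic.
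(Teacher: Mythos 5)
Your proposal is correct and takes essentially the same route as the paper, which simply invokes the Matsumura--Nishida Taylor--expansion method ``from tedious calculations'' without writing them out: the rescaling $\lambda=\mu|\xi|$, the leading biquadratic $\mu_0^4+2\kappa_2\mu_0^2+(\beta_1^2\beta_4^2-\beta_1\beta_2\beta_3\beta_4)=0$ with roots $\mu_0^2=-\kappa_2\pm\kappa_1$ (real branch legitimate because \eqref{1.27} forces $\beta_1^2\beta_4^2-\beta_1\beta_2\beta_3\beta_4<0$), and the matching of the $|\xi|^5$ coefficient all check out and reproduce the coefficients in \eqref{3.8-1} exactly. The only blemish is a typo in your displayed $\widetilde{F}(\mu,z)$, where the $\mu^3$ and $\mu$ terms should carry the factor $z$ rather than $z^2$; your subsequent $z^5$--matching equation is consistent with the correct powers, so the conclusion is unaffected.
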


For $|\xi|\leq\eta_2$, from Lemma \ref{lemma3.1}, a direct computation gives
{\small \begin{equation}\label{3.9}\begin{split}P_1(\xi)=&\begin{pmatrix}
\frac{2\kappa_1+\beta_4^2-\beta_1^2}{8\kappa_1}&
\frac{\beta_1(2\kappa_1+\beta_4^2-\beta_1^2)}{8\kappa_1\sqrt{\kappa_1-\kappa_2}}&\frac{-\beta_1\beta_2}{4\kappa_1}&\frac{-\beta_1\beta_2\beta_4}{4\kappa_1\sqrt{\kappa_1-\kappa_2}}\\
\frac{\beta_1(\beta_1^2-\beta_4^2-2\kappa_1)+2\beta_2\beta_3\beta_4}{8\kappa_1
\sqrt{\kappa_1-\kappa_2}}&\frac{2\kappa_1+\beta_4^2-\beta_1^2}{8\kappa_1}&-\frac{\beta_2\sqrt{\kappa_1-\kappa_2}}{4\kappa_1}&\frac{-\beta_2\beta_4}{4\kappa_1}\\
\frac{-\beta_3\beta_4}{4\kappa_1}&\frac{-\beta_1\beta_3\beta_4}{4\kappa_1\sqrt{\kappa_1-\kappa_2}}&\frac{2\kappa_1+\beta_1^2-\beta_4^2}{8\kappa_1}&\frac{\beta_4(2\kappa_1
+\beta_1^2-\beta_4^2)}{8\kappa_1\sqrt{\kappa_1-\kappa_2}}\\
-\frac{\beta_3\sqrt{\kappa_1-\kappa_2}}{4\kappa_1}&\frac{-\beta_1\beta_3}{4\kappa_1}&\frac{\beta_4(\beta_4^2-\beta_1^2-2\kappa_1)+2\beta_1\beta_2\beta_3}{8\kappa_1\sqrt{\kappa_1-
\kappa_2}}&\frac{2\kappa_1+\beta_1^2-\beta_4^2}{8\kappa_1}
\end{pmatrix}+\mathcal O(|\xi|),\end{split}\end{equation}

\begin{equation}\label{3.10}\begin{split}P_2(\xi)=&\begin{pmatrix}
\frac{2\kappa_1+\beta_4^2-\beta_1^2}{8\kappa_1}&\frac{-\beta_1(2\kappa_1+\beta_4^2-\beta_1^2)}{8\kappa_1\sqrt{\kappa_1-\kappa_2}}&
\frac{-\beta_1\beta_2}{4\kappa_1}&\frac{\beta_1\beta_2\beta_4}{4\kappa_1\sqrt{\kappa_1-\kappa_2}}\\
-\frac{\beta_1(\beta_1^2-\beta_4^2-2\kappa_1)+2\beta_2\beta_3\beta_4}{8\kappa_1\sqrt{\kappa_1-\kappa_2}}&
\frac{2\kappa_1+\beta_4^2-\beta_1^2}{8\kappa_1}&\frac{\beta_2\sqrt{\kappa_1-\kappa_2}}{4\kappa_1}&\frac{-\beta_2\beta_4}{4\kappa_1}\\
\frac{-\beta_3\beta_4}{4\kappa_1}&\frac{\beta_1\beta_3\beta_4}{4\kappa_1\sqrt{\kappa_1-\kappa_2}}&\frac{2\kappa_1+\beta_1^2-\beta_4^2}
{8\kappa_1}&-\frac{\beta_4(2\kappa_1+\beta_1^2-\beta_4^2)}{8\kappa_1\sqrt{\kappa_1-\kappa_2}}\\
\frac{\beta_3\sqrt{\kappa_1-\kappa_2}}{4\kappa_1}&\frac{-\beta_1\beta_3}{4\kappa_1}&-\frac{\beta_4(\beta_4^2-\beta_1^2-2\kappa_1)+2\beta_1\beta_2\beta_3}
{8\kappa_1\sqrt{\kappa_1-\kappa_2}}&\frac{2\kappa_1+\beta_1^2-\beta_4^2}{8\kappa_1}
\end{pmatrix}+\mathcal O(|\xi|),\end{split}\end{equation}

\begin{equation}\label{3.11}\begin{split}P_3(\xi)=&\begin{pmatrix}
\frac{2\kappa_1+\beta_1^2-\beta_4^2}{8\kappa_1}&\frac{\beta_1(2\kappa_1+\beta_1^2-\beta_4^2)}
{8\kappa_1\sqrt{\kappa_2+\kappa_1}}{i}&\frac{\beta_1\beta_2}{4\kappa_1}&\frac{\beta_1\beta_2\beta_4}{4\kappa_1\sqrt{\kappa_2+\kappa_1}}{i}\\
-\frac{\beta_1(\beta_1^2-\beta_4^2+2\kappa_1)+2\beta_2\beta_3\beta_4}{8\kappa_1\sqrt{\kappa_2+\kappa_1}}{i}&\frac{2\kappa_1+\beta_1^2-\beta_4^2}
{8\kappa_1}&-\frac{\beta_2\sqrt{\kappa_2+\kappa_1}}{4\kappa_1}{i}&\frac{\beta_2\beta_4}{4\kappa_1}\\
\frac{\beta_3\beta_4}{4\kappa_1}&\frac{\beta_1\beta_3\beta_4}{4\kappa_1\sqrt{\kappa_2+\kappa_1}}{i}&
\frac{2\kappa_1+\beta_4^2-\beta_1^2}{8\kappa_1}&\frac{\beta_4(2\kappa_1+\beta_4^2-\beta_1^2)}{8\kappa_1\sqrt{\kappa_2+\kappa_1}}{i}\\
-\frac{\beta_3\sqrt{\kappa_2+\kappa_1}}{4\kappa_1}{i}&\frac{\beta_1\beta_3}{4\kappa_1}&-\frac{\beta_4(\beta_4^2-\beta_1^2+2\kappa_1)
+2\beta_1\beta_2\beta_3}{8\kappa_1\sqrt{\kappa_2+\kappa_1}}{i}&\frac{2\kappa_1+\beta_4^2-\beta_1^2}{8\kappa_1}
\end{pmatrix}+\mathcal O(|\xi|),\end{split}\end{equation}

and
\begin{equation}\label{3.12}\begin{split}P_4(\xi)=&\begin{pmatrix}
\frac{2\kappa_1+\beta_1^2-\beta_4^2}{8\kappa_1}&-\frac{\beta_1(2\kappa_1+\beta_1^2-\beta_4^2)}
{8\kappa_1\sqrt{\kappa_2+\kappa_1}}{i}&\frac{\beta_1\beta_2}{4\kappa_1}&-\frac{\beta_1\beta_2\beta_4}{4\kappa_1\sqrt{\kappa_2+\kappa_1}}{i}\\
\frac{\beta_1(\beta_1^2-\beta_4^2+2\kappa_1)+2\beta_2\beta_3\beta_4}{8\kappa_1\sqrt{\kappa_2+\kappa_1}}{i}&\frac{2\kappa_1+\beta_1^2-\beta_4^2}
{8\kappa_1}&\frac{\beta_2\sqrt{\kappa_2+\kappa_1}}{4\kappa_1}{i}&\frac{\beta_2\beta_4}{4\kappa_1}\\
\frac{\beta_3\beta_4}{4\kappa_1}&-\frac{\beta_1\beta_3\beta_4}{4\kappa_1\sqrt{\kappa_2+\kappa_1}}{i}&
\frac{2\kappa_1+\beta_4^2-\beta_1^2}{8\kappa_1}&-\frac{\beta_4(2\kappa_1+\beta_4^2-\beta_1^2)}{8\kappa_1\sqrt{\kappa_2+\kappa_1}}{i}\\
\frac{\beta_3\sqrt{\kappa_2+\kappa_1}}{4\kappa_1}{i}&\frac{\beta_1\beta_3}{4\kappa_1}&\frac{\beta_4(\beta_4^2-\beta_1^2+2\kappa_1)+2\beta_1\beta_2\beta_3}
{8\kappa_1\sqrt{\kappa_2+\kappa_1}}{i}&\frac{2\kappa_1+\beta_4^2-\beta_1^2}{8\kappa_1}
\end{pmatrix}+\mathcal O(|\xi|),\end{split}\end{equation}}

\begin{Lemma}\label{lemma3.2}
For $\eta_2\le |\xi|\le \eta_1$, there exists a positive constant $C$ such that
\begin{equation}\label{3.13}\text{Re}(\lambda_i)\le \theta \quad \text{and} \quad \left | P_i\right |\le C,
\end{equation}
for $1\le i\le 4.$
\end{Lemma}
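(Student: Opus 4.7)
The strategy is compactness plus continuity on the closed annulus $K = \{\xi \in \mathbb{R}^3 : \eta_2 \le |\xi| \le \eta_1\}$. The matrix $\mathcal{A}_1(\xi)$ depends analytically on $\xi$, so its eigenvalues $\lambda_i(\xi)$ (as roots of $F(\lambda;\xi)$ varying continuously with its coefficients) and, wherever the eigenvalues remain pairwise distinct, its spectral projectors $P_i(\xi)$ through formula \eqref{2.22} inherit this continuity. Once both bounds are verified pointwise on $K$, the uniform constant $C$ follows from compactness.

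First, for the bound $\text{Re}(\lambda_i(\xi)) \le \theta$, the key algebraic input is the defining relation for $\theta$,
\begin{equation*}
\nu^+\nu^-\,\theta^2 + (\nu^+\beta_4^2 + \nu^-\beta_1^2)\,\theta + \beta_1^2\beta_4^2 - \beta_1\beta_2\beta_3\beta_4 = 0.
\end{equation*}
Multiplying this identity by $|\xi|^4$ and comparing with \eqref{2.6} yields
\begin{equation*}
F(\theta;\xi) = \theta^4 + (\nu^+ + \nu^-)|\xi|^2\,\theta^3 + (\beta_1^2 + \beta_4^2)|\xi|^2\,\theta^2 > 0
\end{equation*}
for every $\xi \neq 0$. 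Since $F(\cdot;\xi)$ is strictly increasing on $(0,\infty)$, any real positive root lies strictly below $\theta$. For a complex conjugate pair $\lambda = a \pm bi$ I would shift by $\theta$ and apply the Routh--Hurwitz criterion to $G(\mu) := F(\mu + \theta;\xi)$; the Hurwitz minors of $G$ can be reduced, via the defining quadratic for $\theta$ together with the sign condition $\beta_1\beta_4 - \beta_2\beta_3 < 0$ from \eqref{1.27}, to manifestly non-negative polynomial expressions in $\beta_j$, $\nu^\pm$, and $|\xi|$, forcing $\text{Re}(\mu) \le 0$.

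For the projector bound $|P_i(\xi)| \le C$, I would refine the choice of $\eta_1$ and $\eta_2$ so that the four eigenvalues stay pairwise distinct throughout $K$. Near $|\xi| = \eta_2$ this is guaranteed by the asymptotic expansions of Lemma \ref{lemma3.1}, and near $|\xi| = \eta_1$ by the expansion \eqref{2.7}; any accidental interior coincidence can be excluded by a further mild shrinking of the annulus, or, alternatively, treated with the Dunford contour representation of $P_i$, which remains continuous across crossings provided the eigenvalue groups stay separated by a closed curve. With $\lambda_i(\xi)$ pairwise distinct, the denominators $\prod_{j\ne i}(\lambda_i - \lambda_j)$ in \eqref{2.22} are bounded away from zero on $K$, so each $P_i$ is a continuous matrix-valued function on the compact set $K$ and therefore uniformly bounded.

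The main obstacle is the Routh--Hurwitz verification: although each Hurwitz minor of $G(\mu)$ reduces, via the quadratic identity for $\theta$, to a polynomial in $\beta_j$, $\nu^\pm$, $|\xi|$, showing their non-negativity throughout $[\eta_2, \eta_1]$ is a nontrivial algebraic computation. A potentially cleaner alternative would be a direct topological argument: if some $\text{Re}(\lambda_i)$ exceeded $\theta$, continuity of the eigenvalues on the connected compact set $K$ would produce $\xi^\ast \in K$ and $b \in \mathbb{R}$ with $F(\theta + ib;\xi^\ast) = 0$, and separating real and imaginary parts --- again exploiting the defining identity for $\theta$ --- would yield a contradiction.
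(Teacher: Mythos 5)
The paper gives no written proof of this lemma (it is asserted to follow from ``tedious calculations''), so the benchmark is whether your argument actually closes the claim. Your framework is the right one and surely what the authors intend: on the compact annulus $K$ the eigenvalues and projectors are continuous, the denominators $\prod_{j\neq i}(\lambda_i-\lambda_j)$ stay away from zero once the eigenvalues are pairwise distinct, and compactness upgrades pointwise bounds to the uniform constant $C$. Your treatment of the real roots is also correct and in fact sharpens the paper's own computation in Lemma \ref{lemma2.1}: using the defining quadratic for $\theta$ one gets the exact identity $F(\theta;\xi)=\theta^4+(\nu^++\nu^-)|\xi|^2\theta^3+(\beta_1^2+\beta_4^2)|\xi|^2\theta^2>0$, and monotonicity of $F$ on $(0,\infty)$ then forces any real positive root below $\theta$.

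The genuine gap is the complex-conjugate pair, which is precisely the content of Lemma \ref{lemma3.2} not already contained in Lemma \ref{lemma2.1}, and neither of your two routes closes it. The Routh--Hurwitz route is left as ``a nontrivial algebraic computation.'' The ``cleaner'' topological alternative is not automatic either: writing $F(\lambda)=\lambda^4+a_3\lambda^3+a_2\lambda^2+a_1\lambda+a_0$ and supposing a crossing $F(\theta+\mathrm{i}b;\xi^\ast)=0$ with $b\neq0$, the vanishing of the imaginary part only pins down $b^2=F'(\theta)/(4\theta+a_3)$, and the real part then reads
\begin{equation*}
F(\theta)-b^2\bigl(6\theta^2+3a_3\theta+a_2\bigr)+b^4,
\end{equation*}
whose nonvanishing does \emph{not} follow from $F(\theta)>0$ alone, since the middle term is negative. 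So ``separating real and imaginary parts yields a contradiction'' is an assertion, not a proof; you would still have to verify a concrete inequality among $\beta_j$, $\nu^\pm$, $|\xi|$ (or argue differently, e.g.\ by tracking the complex pair from the small-$|\xi|$ expansions of Lemma \ref{lemma3.1}, where all real parts are $O(|\xi|^2)$, and showing no crossing of the line $\operatorname{Re}\lambda=\theta$ can occur as $|\xi|$ increases through the annulus --- which again reduces to the same inequality). As it stands, the proposal proves the projector bound and the real-root half of \eqref{3.13}, but the bound $\operatorname{Re}(\lambda_i)\le\theta$ for non-real eigenvalues remains open.
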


\begin{Lemma}\label{lemma3.3}
There exists a positive constants $\eta_1\gg 1 $ such that, for
$|\xi|\geq \eta_1$, the spectral has the following Taylor series
expansion:
\begin{equation}\label{3.8}
\left\{\begin{array}{lll}\displaystyle \lambda_1=\theta+\mathcal O(|\xi|^{-1}),\\
\displaystyle\lambda_2=\frac{-(\nu^+\beta_4^2+\nu^-\beta_1^2)-\kappa_3
}{2\nu^+\nu^-}+\mathcal O(|\xi|^{-1}),
\\ \displaystyle  \lambda_3=-\nu^+|\xi|^2+\frac{\beta_1^2}{\nu^+}
+\mathcal O(|\xi|^{-1}),
\\ \displaystyle  \lambda_4=-\nu^-|\xi|^2+\frac{\beta_4^2}{\nu^-}+\mathcal O(|\xi|^{-1}),
\end{array}\right.
\end{equation}
where $\kappa_3=\sqrt{(\nu^+\beta_4^2+\nu^-\beta_1^2)^2+4\nu^+\nu^-(\beta_1\beta_2\beta_3\beta_4-\beta_1^2\beta_4^2)}$.
\end{Lemma}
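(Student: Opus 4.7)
The plan is to identify the leading balances in the characteristic equation \eqref{3.4} as $|\xi|\to\infty$ and then recover the next-order corrections via the implicit function theorem, following the Taylor--expansion strategy of \cite{Mat1}. Rewriting the quartic as
\begin{equation*}
F(\lambda) = \lambda^{2}(\lambda+\nu^{+}|\xi|^{2})(\lambda+\nu^{-}|\xi|^{2}) + (\beta_{1}^{2}+\beta_{4}^{2})|\xi|^{2}\lambda^{2} + (\nu^{+}\beta_{4}^{2}+\nu^{-}\beta_{1}^{2})|\xi|^{4}\lambda + (\beta_{1}^{2}\beta_{4}^{2}-\beta_{1}\beta_{2}\beta_{3}\beta_{4})|\xi|^{4}
\end{equation*}
exhibits the principal zeros as $0$ (double), $-\nu^{+}|\xi|^{2}$ and $-\nu^{-}|\xi|^{2}$, so two roots remain bounded ($\lambda_{1},\lambda_{2}$) while two escape to infinity like $-\nu^{\pm}|\xi|^{2}$ ($\lambda_{3},\lambda_{4}$).

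For the bounded branches I set $z=|\xi|^{-1}$ and consider $G(\lambda,z):=z^{4}F(\lambda)=0$; at $z=0$ this reduces to the quadratic $\nu^{+}\nu^{-}\lambda^{2}+(\nu^{+}\beta_{4}^{2}+\nu^{-}\beta_{1}^{2})\lambda+\beta_{1}^{2}\beta_{4}^{2}-\beta_{1}\beta_{2}\beta_{3}\beta_{4}=0$, whose discriminant is precisely $\kappa_{3}^{2}$. By \eqref{1.27} the constant term is negative, so both roots are real and simple, given by $(-(\nu^{+}\beta_{4}^{2}+\nu^{-}\beta_{1}^{2})\pm\kappa_{3})/(2\nu^{+}\nu^{-})$; the positive one coincides with $\theta$ from Theorem \ref{2mainth}. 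The implicit function theorem then supplies smooth branches $\lambda_{1}(z),\lambda_{2}(z)$ with remainder $\mathcal{O}(z)=\mathcal{O}(|\xi|^{-1})$, giving the first two expansions in \eqref{3.8}.

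For the divergent branches I pass to $\mu=\lambda/|\xi|^{2}$ and apply the implicit function theorem to $|\xi|^{-8}F(|\xi|^{2}\mu)=0$ at the simple roots $\mu=-\nu^{\pm}$ of the limiting polynomial $\mu^{2}(\mu+\nu^{+})(\mu+\nu^{-})=0$, which produces smooth branches $\mu(z)=-\nu^{\pm}+o(1)$. The sub-leading coefficients are then extracted by substituting the refined ansatz $\lambda=-\nu^{+}|\xi|^{2}+b^{+}+\mathcal{O}(|\xi|^{-1})$ (resp.\ $\lambda=-\nu^{-}|\xi|^{2}+b^{-}+\mathcal{O}(|\xi|^{-1})$) into $F$ and matching the $|\xi|^{6}$ coefficient. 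After routine algebra the $|\xi|^{8}$ coefficient cancels automatically, and the $|\xi|^{6}$ matching collapses to $(\nu^{+}-\nu^{-})\nu^{+}(\beta_{1}^{2}-\nu^{+}b^{+})=0$ (resp.\ $(\nu^{-}-\nu^{+})\nu^{-}(\beta_{4}^{2}-\nu^{-}b^{-})=0$), yielding $b^{+}=\beta_{1}^{2}/\nu^{+}$ and $b^{-}=\beta_{4}^{2}/\nu^{-}$ as asserted.

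The main obstacle is the degenerate situation $\nu^{+}=\nu^{-}$, in which the matching equations for $b^{\pm}$ become trivial and the sub-leading coefficients must be determined from the $|\xi|^{4}$ balance instead; this can be handled either by a direct higher-order computation or by viewing the expansion as the limit of a nearby non-degenerate configuration. Once the four smooth branches are in hand, their pairwise distinct leading-order limits ensure, by continuity, the existence of a threshold $\eta_{1}\gg 1$ such that the $\lambda_{i}(\xi)$ remain simple throughout $|\xi|\geq\eta_{1}$; this simplicity is precisely what is needed for the spectral projections \eqref{2.22} to be well-defined and for the decomposition \eqref{2.21}--\eqref{2.23} to apply in the subsequent $L^{2}$--estimates.
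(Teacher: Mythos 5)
Your proposal is correct and follows exactly the route the paper intends: the paper's own ``proof'' is just the remark that the expansions follow from ``tedious but direct calculations'' via the Taylor-series argument of Matsumura--Nishida, and you have supplied precisely those calculations --- the factorization of $F$, the rescalings $z=|\xi|^{-1}$ and $\mu=\lambda/|\xi|^2$, the identification of the limiting quadratic with discriminant $\kappa_3^2$ and positive root $\theta$, and the $|\xi|^{6}$-matching yielding $b^{\pm}=\beta_{1}^{2}/\nu^{+}$, $\beta_{4}^{2}/\nu^{-}$ all check out. One caveat on the degenerate case $\nu^{+}=\nu^{-}=\nu$ that you flag: the ``limit of a nearby non-degenerate configuration'' does not rescue the stated constants, since the equation there reduces to a quadratic in $w=\mu(\mu+\nu)$ and a direct computation gives constant terms $\bigl((\beta_1^2+\beta_4^2)\mp\sqrt{(\beta_1^2-\beta_4^2)^2+4\beta_1\beta_2\beta_3\beta_4}\,\bigr)/(2\nu)$ rather than $\beta_1^2/\nu$ and $\beta_4^2/\nu$ (the expansion is not continuous in $\nu^{-}\to\nu^{+}$ because $\eta_1$ degenerates); this is a defect of the lemma as stated rather than of your argument, which is complete in the generic case the paper tacitly assumes.
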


 For $|\xi|\geq\eta_1$, from Lemma \ref{lemma3.3}, a direct computation gives
 \begin{equation}\label{3.15}\begin{split}P_1(\xi)=&\begin{pmatrix}
\frac{\nu^+\beta_4^2-\nu^-\beta_1^2+\kappa_3}{2\kappa_3}&
0&-\frac{\beta_1\beta_2\nu^-}{\kappa_3}&0\\
0&0&0&0\\
\frac{-\beta_3\beta_4\nu^+}{\kappa_3}&0&\frac{\nu^-\beta_1^2-\nu^+\beta_4^2+\kappa_3}{2\kappa_3}&0\\
0&0&0&0
\end{pmatrix}+\mathcal O(|\xi|^{-1}),\end{split}\end{equation}

\begin{equation}\label{3.16}\begin{split}P_2(\xi)=&\begin{pmatrix}
\frac{\nu^-\beta_1^2-\nu^+\beta_4^2+\kappa_3}{2\kappa_3}&
0&\frac{\beta_1\beta_2\nu^-}{\kappa_3}&0\\
0&0&0&0\\
\frac{\beta_3\beta_4\nu^+}{\kappa_3}&0&\frac{\nu^+\beta_4^2-\nu^-\beta_1^2+\kappa_3}{2\kappa_3}&0\\
0&0&0&0
\end{pmatrix}+\mathcal O(|\xi|),\end{split}\end{equation}

\begin{equation}\label{3.17}\begin{split}P_3(\xi)=&\begin{pmatrix}
0&0&0&0\\
0&1&0&0\\
0&0&0&0\\
0&0&0&0\\
\end{pmatrix}+\mathcal O(|\xi|),\end{split}\end{equation}

and
\begin{equation}\label{3.18}\begin{split}P_4(\xi)=&\begin{pmatrix}
0&0&0&0\\
0&0&0&0\\
0&0&0&0\\
0&0&0&1\\
\end{pmatrix}+\mathcal O(|\xi|).\end{split}\end{equation}

With the help of Lemmas \ref{lemma3.1}--\ref{lemma3.3}, we can have the following proposition which is concerned with
long time properties of $L^2$--norm for the
solution.
\begin{Proposition}[$L^2$--theory]\label{Prop3.4} It holds that
\begin{equation}\|\text{e}^{t\mathcal B_1}\mathcal U(0)\|_{L^2}\lesssim \text{e}^{\theta t}\| {\mathcal U}(0)\|_{L^2},\label{2.30}\end{equation}
 for any $t\geq 0$.
\end{Proposition}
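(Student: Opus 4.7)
The plan is to reduce the semigroup estimate to a pointwise--in--frequency estimate via Plancherel's theorem, and then exploit the spectral decomposition \eqref{2.21}--\eqref{2.23} combined with the asymptotics collected in Lemmas \ref{lemma3.1}--\ref{lemma3.3}. Concretely, since
\[
\|\text{e}^{t\mathcal B_1}\mathcal U(0)\|_{L^2}^2=\int_{\mathbb R^3}|\text{e}^{t\mathcal A_1(\xi)}\widehat{\mathcal U}_0(\xi)|^2\,\mathrm d\xi\le \int_{\mathbb R^3}\Bigl(\sum_{i=1}^4\text{e}^{\text{Re}(\lambda_i(|\xi|))t}|P_i(\xi)|\Bigr)^2|\widehat{\mathcal U}_0(\xi)|^2\,\mathrm d\xi,
\]
it suffices to prove that there exists a constant $C>0$ such that for every $\xi\in\mathbb R^3$ and every $i\in\{1,2,3,4\}$ one has $\text{Re}(\lambda_i(|\xi|))\le \theta$ and $|P_i(\xi)|\le C$. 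Then a further application of Plancherel's theorem yields the conclusion \eqref{2.30}.

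To verify these two uniform bounds I would split the frequency space into the three standard regimes $\{|\xi|\le \eta_2\}$, $\{\eta_2\le|\xi|\le\eta_1\}$, and $\{|\xi|\ge\eta_1\}$, with $\eta_1\gg1$ and $\eta_2\ll1$ chosen as in Lemmas \ref{lemma3.1} and \ref{lemma3.3}. In the low--frequency regime, the expansions in Lemma \ref{lemma3.1} give $\text{Re}(\lambda_i)=-c_i|\xi|^2+\mathcal O(|\xi|^3)$ with $c_i>0$; shrinking $\eta_2$ if necessary, we have $\text{Re}(\lambda_i)\le 0\le\theta$, and the explicit formulas \eqref{3.9}--\eqref{3.12} make clear that each $P_i(\xi)$ is uniformly bounded on this regime. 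In the middle regime, Lemma \ref{lemma3.2} provides both bounds directly. In the high--frequency regime, Lemma \ref{lemma3.3} gives $\lambda_1=\theta+\mathcal O(|\xi|^{-1})$, so $\text{Re}(\lambda_1)\le \theta+C|\xi|^{-1}$, whereas $\text{Re}(\lambda_2)\to -(\nu^+\beta_4^2+\nu^-\beta_1^2+\kappa_3)/(2\nu^+\nu^-)<0$ and $\text{Re}(\lambda_{3,4})\sim -\nu^\pm|\xi|^2\to-\infty$; also \eqref{3.15}--\eqref{3.18} show $|P_i(\xi)|\le C$ uniformly.

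The one delicate point is handling the small residual $\mathcal O(|\xi|^{-1})$ in the expansion of $\lambda_1$ for $|\xi|\ge\eta_1$, since a priori this could produce $\text{Re}(\lambda_1)>\theta$. Here I would invoke Lemma \ref{lemma2.1}, which asserts precisely that $\lambda_1<\theta$ for all $\xi\in\mathbb R^3$ as a consequence of $F$ being strictly increasing on $(0,\infty)$ together with $F(\theta)>0$. Once this sharp inequality is in hand, the high--frequency estimate $\text{Re}(\lambda_1)\le \theta$ holds globally, not just asymptotically. The other three eigenvalues have real parts bounded above by some $-c<0$ on $\{|\xi|\ge\eta_1\}$, which is trivially $\le\theta$.

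Combining the three regimes, we arrive at a uniform bound
\[
\sum_{i=1}^4\text{e}^{\text{Re}(\lambda_i(|\xi|))t}|P_i(\xi)|\le C\,\text{e}^{\theta t}\qquad\text{for all }\xi\in\mathbb R^3,\ t\ge0.
\]
Inserting this into the Plancherel identity above and recognising $\int|\widehat{\mathcal U}_0|^2=\|\mathcal U(0)\|_{L^2}^2$ finishes the proof of \eqref{2.30}. The main obstacle is really the global sign statement $\text{Re}(\lambda_1)\le\theta$ on the high--frequency set, which is exactly what was prepared in Lemma \ref{lemma2.1}; all other estimates are routine consequences of the Taylor expansions in Lemmas \ref{lemma3.1}--\ref{lemma3.3} and the explicit projector formulas.
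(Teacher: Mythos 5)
Your proposal follows exactly the route the paper intends (the paper states Proposition \ref{Prop3.4} as a direct consequence of Lemmas \ref{lemma3.1}--\ref{lemma3.3} without writing out the details): Plancherel, the decomposition $\text{e}^{t\mathcal A_1(\xi)}=\sum_i\text{e}^{\lambda_i t}P_i(\xi)$, and the three frequency regimes, with Lemma \ref{lemma2.1} supplying the sharp bound $\lambda_1<\theta$. One intermediate claim is wrong, though it does not sink the argument: in the low--frequency regime Lemma \ref{lemma3.1} does \emph{not} give $\text{Re}(\lambda_i)=-c_i|\xi|^2+\mathcal O(|\xi|^3)\le 0$ for all $i$. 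Because \eqref{1.27} forces $\kappa_1>\kappa_2$, the expansion of $\lambda_1$ carries the positive linear term $+\sqrt{\kappa_1-\kappa_2}\,|\xi|$, so $\lambda_1>0$ for small nonzero $|\xi|$ --- this is precisely the low--frequency instability mechanism. What saves the step is only that $\lambda_1(|\xi|)\to 0$ as $|\xi|\to 0$, hence $\text{Re}(\lambda_1)\le\theta$ on $\{|\xi|\le\eta_2\}$ after shrinking $\eta_2$ (or, globally, by invoking \eqref{2.8} of Lemma \ref{lemma2.1}); you should replace the bound ``$\le 0\le\theta$'' by ``$\le\theta$'' justified this way. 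With that correction the uniform bound $\sum_i\text{e}^{\text{Re}(\lambda_i)t}|P_i(\xi)|\lesssim\text{e}^{\theta t}$ and hence \eqref{2.30} follow as you describe.
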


\subsection{Spectral analysis for system \eqref{2.2}} We consider the Cauchy problem of \eqref{2.2} with the initial data
 \begin{equation}\label{3.20}(\phi^+, \phi^-)\big|_{t=0}=(\Lambda^{-1}{\rm curl}\tilde{u}^{+}_0, \Lambda^{-1}{\rm curl}\tilde{u}^{-}_0)(x).
 \end{equation}
From the classic theory of the heat equation, it is clear that the solution $\mathcal V=(\phi^+, \phi^-)^t$  to the IVP \eqref{2.2} and \eqref{3.20} satisfies the following decay
estimates.
\begin{Proposition}[$L^2$--theory]\label{Prop3.5} It holds that
\begin{equation}\|\text{e}^{-\nu^\pm t\Lambda^2}\mathcal V(0)\|_{L^2}\lesssim
\| {\mathcal V}(0)\|_{L^2},\nonumber\end{equation}
 for any $t\geq 0$.
\end{Proposition}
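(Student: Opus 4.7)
The plan is to read off the estimate directly from the explicit solution formula for the heat equation via the Fourier transform. The system \eqref{2.2} is diagonal in $(\phi^+,\phi^-)$, with each component satisfying a scalar heat equation, so there is no coupling to worry about and no need to invoke any of the spectral machinery developed in Lemmas \ref{lemma3.1}--\ref{lemma3.3}.

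First, I would apply $\mathfrak{F}$ to \eqref{2.2} to obtain the decoupled ODEs
\begin{equation}
\partial_t \widehat{\phi^\pm}(\xi,t) + \nu_1^\pm |\xi|^2 \widehat{\phi^\pm}(\xi,t)=0,
\end{equation}
whose solution is $\widehat{\phi^\pm}(\xi,t)=\text{e}^{-\nu_1^\pm |\xi|^2 t}\widehat{\phi^\pm}(\xi,0)$. In other words, the semigroup symbol is the scalar Gaussian multiplier $\text{e}^{-\nu_1^\pm t|\xi|^2}$, which is exactly the Fourier representation of $\text{e}^{-\nu_1^\pm t\Lambda^2}$.

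Next, since $\nu_1^\pm>0$, the multiplier satisfies the pointwise bound $|\text{e}^{-\nu_1^\pm t|\xi|^2}|\le 1$ for every $\xi\in\mathbb{R}^3$ and every $t\ge 0$. Therefore, by Plancherel's theorem,
\begin{equation}
\|\phi^\pm(t)\|_{L^2}=\|\widehat{\phi^\pm}(t)\|_{L^2}=\bigl\|\text{e}^{-\nu_1^\pm t|\xi|^2}\widehat{\phi^\pm}(0)\bigr\|_{L^2}\le \|\widehat{\phi^\pm}(0)\|_{L^2}=\|\phi^\pm(0)\|_{L^2}.
\end{equation}
Summing the $\pm$ contributions in accordance with the convention $\|(f,g)\|_X=\|f\|_X+\|g\|_X$ yields the desired estimate $\|\text{e}^{-\nu^\pm t\Lambda^2}\mathcal V(0)\|_{L^2}\lesssim \|\mathcal V(0)\|_{L^2}$.

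There is essentially no obstacle here: the result is the standard contractivity (in fact, dissipativity) of the heat semigroup on $L^2$, and the only thing to verify is that both viscosity constants $\nu_1^\pm$ are strictly positive, which is guaranteed by the physical assumption $\mu^\pm>0$ stated after \eqref{1.6}. If one wanted a sharper statement one could even extract decay in the form $\|\Lambda^s \phi^\pm(t)\|_{L^2}\lesssim t^{-s/2}\|\phi^\pm(0)\|_{L^2}$ by using $\sup_{\xi}|\xi|^s\text{e}^{-\nu_1^\pm t|\xi|^2}\lesssim t^{-s/2}$, but this is not needed for the stated claim.
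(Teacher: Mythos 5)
Your proof is correct and is precisely the "classic theory of the heat equation" that the paper invokes without detail: diagonalize via Fourier transform, bound the Gaussian multiplier by $1$, and apply Plancherel. (You also rightly work with the coefficient $\nu_1^\pm$ appearing in \eqref{2.2} rather than the $\nu^\pm$ in the proposition's notation, which is a harmless inconsistency in the paper's statement.)
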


We consider the Cauchy problem of \eqref{1.28} with the initial data
\begin{equation}\label{3.21}
\left(\tilde n^{+}, \tilde u^{+}, n^{-},\tilde u^{-}\right)(x, 0)=\left(n_{0}^{+}, \tilde u_{0}^{+}, n_{0}^{-},\tilde u_{0}^{-}\right)(x) \rightarrow(0, \overrightarrow{0}, 0, \overrightarrow{0}), \quad \text { as }|x| \rightarrow+\infty,
\end{equation}
By virtue of the definition of $\varphi^{\pm}$ and $\phi^{\pm}$, and the
fact that the relations
$$\tilde{u}^{\pm}=-\wedge^{-1}\nabla\varphi^{\pm}-\wedge^{-1}\text{div}\phi^{\pm},$$
involve pseudo--differential operators of degree zero, the estimates
in space $H^k(\mathbb R^3)$ for the original function $\tilde{u}^{\pm}$ will
be the same as for $(\varphi^{\pm}, \phi^{\pm})$. Combining Propositions
\ref{Prop3.4} and  \ref{Prop3.5}, we have the
following result concerning long time properties for the solution
semigroup $\text{e}^{t\mathcal{A}}$.
\begin{Proposition}\label{Prop3.6} The global solution $\tilde{U}=(\tilde{n}^+,\tilde{u}^+,\tilde{n}^-,\tilde{u}^-)^t$ of the IVP \eqref{1.28} and \eqref{3.21} satisfies
\begin{equation}\| \text{e}^{t\mathcal{A}}\tilde{U}(0)\|_{L^2}\lesssim \text{e}^{\theta t}
\| \tilde{U}(0)\|_{L^2}.\label{3.22}\end{equation}
\end{Proposition}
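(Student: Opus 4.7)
The plan is to assemble Proposition \ref{Prop3.6} by combining the two semigroup estimates already in hand — Proposition \ref{Prop3.4} for the compressible block and Proposition \ref{Prop3.5} for the incompressible block — through the Hodge decomposition introduced just above equation \eqref{2.1}. The key point is that all the operators translating between $\tilde{U}=(\tilde{n}^+,\tilde{u}^+,\tilde{n}^-,\tilde{u}^-)$ and the pair $(\mathcal{U},\mathcal{V})$ are Fourier multipliers of degree zero, so they are bounded on $L^2$ by Plancherel.

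First, I would set $\varphi^\pm=\Lambda^{-1}\mathrm{div}\,\tilde{u}^\pm$ and $\phi^\pm=\Lambda^{-1}\mathrm{curl}\,\tilde{u}^\pm$ at the level of the initial data, so that $\mathcal{U}(0)=(\tilde{n}_0^+,\Lambda^{-1}\mathrm{div}\,\tilde{u}_0^+,\tilde{n}_0^-,\Lambda^{-1}\mathrm{div}\,\tilde{u}_0^-)^t$ solves \eqref{2.1} with the corresponding initial condition and $\mathcal{V}(0)=(\Lambda^{-1}\mathrm{curl}\,\tilde{u}_0^+,\Lambda^{-1}\mathrm{curl}\,\tilde{u}_0^-)^t$ solves \eqref{2.2}. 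To justify this decoupling, I would take $\Lambda^{-1}\mathrm{div}$ and $\Lambda^{-1}\mathrm{curl}$ of the momentum equations in \eqref{1.28}, observing that the pressure gradients $\nabla\tilde{n}^\pm$ contribute only to the compressible block (since $\mathrm{curl}\,\nabla\equiv 0$) while the Lamé operator splits cleanly as $-\nu_1^\pm\Delta-\nu_2^\pm\nabla\,\mathrm{div}=-\nu^\pm\Lambda^2$ on the gradient part and $-\nu_1^\pm\Lambda^2$ on the curl part. The boundedness of $\xi_j/|\xi|$ then yields
\begin{equation*}
\|\mathcal{U}(0)\|_{L^2}+\|\mathcal{V}(0)\|_{L^2}\lesssim \|\tilde{U}(0)\|_{L^2}.
\end{equation*}

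Next, I would invoke the two propositions. Proposition \ref{Prop3.4} gives $\|\mathrm{e}^{t\mathcal{B}_1}\mathcal{U}(0)\|_{L^2}\lesssim \mathrm{e}^{\theta t}\|\mathcal{U}(0)\|_{L^2}$, which directly controls $\tilde{n}^\pm(t)$ and $\varphi^\pm(t)$. Proposition \ref{Prop3.5} gives $\|\mathrm{e}^{-\nu_1^\pm t\Lambda^2}\mathcal{V}(0)\|_{L^2}\lesssim \|\mathcal{V}(0)\|_{L^2}$, and since $\theta>0$ by Theorem \ref{2mainth} one has $1\le \mathrm{e}^{\theta t}$ for every $t\ge 0$, so the heat-equation estimate can be absorbed into the exponentially growing factor. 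Reconstructing $\tilde{u}^\pm(t)=-\Lambda^{-1}\nabla\varphi^\pm(t)-\Lambda^{-1}\mathrm{div}\,\phi^\pm(t)$ and using the zero-order multiplier bound once more, I obtain $\|\tilde{u}^\pm(t)\|_{L^2}\lesssim \|\varphi^\pm(t)\|_{L^2}+\|\phi^\pm(t)\|_{L^2}$, and assembling the pieces concludes \eqref{3.22}.

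The proof is essentially a bookkeeping exercise, so I do not expect any substantive obstacle: all the hard analytic work (the Taylor expansions of the spectrum in Lemmas \ref{lemma3.1}–\ref{lemma3.3} and the resulting semigroup bound) has already been carried out in Proposition \ref{Prop3.4}. The only point that requires care is the decoupling argument verifying that $\varphi^\pm$ and $\phi^\pm$ genuinely satisfy \eqref{2.1} and \eqref{2.2} — i.e. that Hodge decomposition commutes with the linearized evolution $\mathcal{A}$ — but this reduces to the identities $\mathrm{curl}\,\nabla=0$ and $\mathrm{div}(\Delta z)=\Delta\,\mathrm{div}\,z$, $\mathrm{curl}(\nabla\,\mathrm{div}\,z)=0$, which are immediate.
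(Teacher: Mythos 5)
Your proposal is correct and follows essentially the same route as the paper: the paper likewise obtains Proposition \ref{Prop3.6} by combining Propositions \ref{Prop3.4} and \ref{Prop3.5} through the Hodge decomposition, noting that the relations $\tilde{u}^{\pm}=-\Lambda^{-1}\nabla\varphi^{\pm}-\Lambda^{-1}\mathrm{div}\,\phi^{\pm}$ involve only degree-zero pseudo-differential operators, so the $L^2$ estimates transfer between $\tilde{u}^{\pm}$ and $(\varphi^{\pm},\phi^{\pm})$. Your write-up simply makes explicit the bookkeeping (the decoupling of the two blocks and the absorption of the heat-semigroup bound into $\mathrm{e}^{\theta t}$ since $\theta>0$) that the paper leaves implicit.
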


\bigskip

\section{Nonlinear instability}\label{1section_appendix-2}

We mention that the local existence of strong solutions to a generic compressible
two--fluid model can be established by  using the standard iteration arguments as in \cite{Wen1} whose
details are omitted. We can arrive at the following conclusion:
\begin{Proposition}\label{Prop4.1} Assume that the notations and hypotheses  in Theorem \ref{3mainth}  are in force. For any given initial data $\left(n_0^+,u_0^+,n_0^-,u_0^-\right)\in H^4(\mathbb R^3)$ satisfying $\inf_{x\in\mathbb R^3}\{n_0^\pm+1\}>0$, there exist a $T>0$ and a unique strong solution $(n^+,u^+,n^-,u^-)\in C^0([0,T];H^4(\mathbb R^3))$ to the Cauchy problem \eqref{1.26}--\eqref{1.27}. Moreover, the strong solution satisfies
\begin{equation}\nonumber
\begin{split}
 \mathcal E(t)
\leq C(T) \mathcal E(0),
\end{split}
\end{equation}
where $\mathcal E(t)=\left\|\left(n^+,u^+,n^-,u^-\right)(t)\right\|_{H^4}$.
\end{Proposition}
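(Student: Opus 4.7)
The plan is to construct the solution by a classical Picard iteration. I define a sequence $(n^{\pm,k}, u^{\pm,k})_{k\ge 0}$ by setting the zeroth iterate equal to the initial data extended to be constant in $t$, and at step $k+1$ by solving the linear system obtained from \eqref{1.25} by freezing the nonlinear sources $\mathcal F_1,\dots,\mathcal F_4$ at the $k$-th iterate while retaining the linear principal part on the left. Each such linearized step admits a unique $H^4$ solution on a common interval $[0,T]$ by classical parabolic theory applied to the velocity equations coupled with transport-type equations for the densities, provided the arguments inside $g_\pm, \bar g_\pm, h_\pm, k_\pm, l_\pm$ stay well-defined, that is, $\inf_x(n^{\pm,k}+1)>0$.

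The heart of the argument is a uniform-in-$k$ a priori $H^4$ estimate on a short interval $[0,T]$ whose length depends only on $\mathcal E(0)$ and on $\inf_x(n_0^\pm+1)$. For multi-indices $|\alpha|\le 4$, I apply $\partial^\alpha$ to the linearized system and test against $\partial^\alpha n^{\pm,k+1}$ and $\partial^\alpha u^{\pm,k+1}$. The skew-symmetric part of the principal coupling between $\nabla n^\pm$ and $\operatorname{div} u^\pm$ cancels after the symmetric rescaling introduced just above \eqref{1.25}, and the parabolic dissipation $-\nu_1^\pm \Delta u^\pm -\nu_2^\pm \nabla \operatorname{div} u^\pm$ then yields integrated control of $\|\nabla^5 u^{\pm,k+1}\|_{L^2_t L^2_x}$. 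The composite coefficients in $\mathcal F_j$ are smooth on $\{n^\pm>-1\}$, so Moser-type estimates together with the embedding $H^2(\mathbb R^3)\hookrightarrow L^\infty$ bound their $H^4$ norms by polynomials in $\mathcal E_k(t)$. Combining these yields a closed Gronwall-type inequality, from which the uniform bound $\sup_k\sup_{[0,T]} \mathcal E_k(t)\le 2C\mathcal E(0)$ follows by induction. Positivity $\inf_x(n^{\pm,k+1}+1)\ge \tfrac12\inf_x(n_0^\pm+1)$ is propagated along the iteration using the transport-type nature of \eqref{1.25}$_1$, \eqref{1.25}$_3$ together with the $W^{1,\infty}$ bound on $u^{\pm,k}$ from $H^4\hookrightarrow W^{2,\infty}$, shrinking $T$ if necessary.

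Convergence is obtained at lower regularity, since the quasilinearity prevents direct contraction in $H^4$. Setting $\delta n^{\pm,k}:=n^{\pm,k+1}-n^{\pm,k}$ and $\delta u^{\pm,k}:=u^{\pm,k+1}-u^{\pm,k}$, I subtract two successive linearized problems and perform an $L^2$ energy estimate; the uniform $H^4$ bound on the coefficients then yields
\[
\sup_{[0,T']}\|(\delta n^{\pm,k},\delta u^{\pm,k})\|_{L^2}^2\le \tfrac12\sup_{[0,T']}\|(\delta n^{\pm,k-1},\delta u^{\pm,k-1})\|_{L^2}^2
\]
for $T'\in(0,T]$ small enough. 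Thus $(n^{\pm,k},u^{\pm,k})$ is Cauchy in $C^0([0,T'];L^2)$ and converges to a limit which lies in $L^\infty([0,T'];H^4)$ by weak-$*$ compactness; interpolation together with a Bona--Smith-type regularization argument upgrades this to $C^0([0,T'];H^4)$, and the same $L^2$-difference estimate applied to two solutions gives uniqueness. The final inequality $\mathcal E(t)\le C(T)\mathcal E(0)$ is exactly the uniform bound inherited by the limit.

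The main technical obstacle is closing the top-order $H^4$ estimate in spite of the non-conservative, non-symmetric structure of \eqref{1.25}: the asymmetric cross terms $\beta_2\nabla n^-$ in the $u^+$-equation and $\beta_3\nabla n^+$ in the $u^-$-equation do not cancel by themselves, and must be balanced against corresponding cross terms generated from the density equations through a carefully weighted energy functional. This is precisely what the rescaling by $\sqrt{\alpha_1}, \sqrt{\alpha_4}$ above \eqref{1.25} is designed to facilitate, rendering the principal part amenable to a symmetric-hyperbolic type estimate for the densities up to controllable commutator remainders. Once this step is in place, the remainder of the argument is standard Friedrichs--Kato local well-posedness for partially dissipative hyperbolic--parabolic systems, so the details can be reproduced as in \cite{Wen1}.
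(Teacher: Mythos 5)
The paper itself gives no proof of Proposition \ref{Prop4.1}: it simply defers to ``standard iteration arguments as in \cite{Wen1}''. Your proposal is precisely that standard Friedrichs--Kato route (linearized iteration, uniform $H^4$ bound, contraction in $L^2$, weak-$*$ passage to the limit plus a Bona--Smith upgrade), so in spirit you are doing exactly what the authors intend, and the skeleton — uniform high-norm bound on a time interval depending only on $\mathcal E(0)$ and $\inf_x(n_0^\pm+1)$, convergence at the lower $L^2$ level, uniqueness from the same difference estimate — is sound.

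There is, however, one point where the scheme as you literally describe it would fail to close. You propose ``freezing the nonlinear sources $\mathcal F_1,\dots,\mathcal F_4$ at the $k$-th iterate while retaining the linear principal part on the left''. But $\mathcal F_2$ and $\mathcal F_4$ contain the top-order quasilinear viscous terms $\mu^{\pm}l_{\pm}(n^+,n^-)\partial_j^2u_i^{\pm}$ and $(\mu^{\pm}+\lambda^{\pm})l_{\pm}(n^+,n^-)\partial_i\partial_ju_j^{\pm}$ (see \eqref{1.24}). Proposition \ref{Prop4.1} is stated for general $H^4$ data bounded away from vacuum, not for a small perturbation of the constant state, so $\|l_{\pm}\|_{L^\infty}$ need not be small. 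In the $H^4$ energy estimate the contribution $\int\partial^{\alpha}\bigl(l_{\pm}\Delta u^{\pm,k}\bigr)\cdot\partial^{\alpha}u^{\pm,k+1}$ with $|\alpha|=4$ can only be handled, after one integration by parts, by a term of the form $C\|l_{\pm}\|_{L^\infty}^2\|\nabla^5u^{\pm,k}\|_{L^2}^2$, and since $C\|l_{\pm}\|_{L^\infty}^2$ is not small the dissipation budget at step $k+1$ cannot absorb the step-$k$ dissipation norm; the iteration bound then degrades geometrically in $k$ (equivalently, demanding $\mathcal F_2(n^k,u^k)\in L^2_tH^3$ pointwise in the iteration asks for $u^k\in L^2_tH^6$, a loss of one derivative). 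The standard remedy — and what references such as \cite{Wen1} actually do — is to keep the full variable-coefficient elliptic operator implicit: linearize so that $\frac{\mu^{\pm}}{\rho^{\pm,k}}\Delta u^{\pm,k+1}$ (coefficients frozen at step $k$, top derivatives at step $k+1$) appears on the left, rather than splitting it into a constant-coefficient part plus a right-hand-side remainder. With that modification your argument goes through; without it, the uniform-in-$k$ $H^4$ bound you invoke is not available for large data.
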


\vspace{4mm}
\textbf{\textit{Proof of  Theorem \ref{3mainth}.}}  Now we are in a position to prove Theorem \ref{3mainth} by adopting the basic ideas in \cite{Guo1,Jang,Jiang1,Jiang2,WangT}.
In view of Theorem \ref{2mainth}, we can construct a linear solution $\left(\tilde n^+_\vartheta,\tilde u^+_\vartheta,\tilde n^-_\vartheta,\tilde u^-_\vartheta\right)\in C^0([0,\infty);H^4(\mathbb R^3))$ to the linear system \eqref{1.28}. Moreover, without loss of generality, we suppose that
\begin{equation}\mathcal E\left(\tilde n^+_{0,\vartheta},\tilde u^+_{0,\vartheta},\tilde n^-_{0,\vartheta},\tilde u^-_{0,\vartheta}\right)=\left\|\left(\tilde n^+_{0,\vartheta}, \tilde u^+_{0,\vartheta},\tilde n^-_{0,\vartheta}, \tilde u^-_{0,\vartheta}\right)\right\|_{H^4}=1.\nonumber
\end{equation}
Denote $\left(n^{+,\varepsilon}_{0,\vartheta},u^{+,\varepsilon}_{0,\vartheta},n^{-,\varepsilon}_{0,\vartheta},u^{-,\varepsilon}_{0,\vartheta}\right)\overset{\triangle}=\varepsilon \left(\tilde n^+_{0,\vartheta},\tilde u^+_{0,\vartheta},\tilde n^-_{0,\vartheta},\tilde u^-_{0,\vartheta}\right)$. Then, by virtue of Proposition \ref{Prop4.1}, there is a positive constant $\varepsilon_0$ which may be quite small such that for any $\varepsilon<\varepsilon_0$, there is a unique local strong solution $\left(n^{+,\varepsilon}_\vartheta,u^{+,\varepsilon}_\vartheta,n^{-,\varepsilon}_\vartheta,u^{-,\varepsilon}_\vartheta\right)\in C^0([0,T];H^4(\mathbb R^3))$ to the Cauchy problem \eqref{1.26}--\eqref{1.27}, emanating from the initial data $\left(n^{+,\varepsilon}_{0,\vartheta},u^{+,\varepsilon}_{0,\vartheta},n^{-,\varepsilon}_{0,\vartheta},u^{-,\varepsilon}_{0,\vartheta}\right)$ with $\mathcal E\left(n^{+,\varepsilon}_{0,\vartheta},u^{+,\varepsilon}_{0,\vartheta},n^{-,\varepsilon}_{0,\vartheta},u^{-,\varepsilon}_{0,\vartheta}\right)=\varepsilon$.

We fix $\varepsilon_0>0$ which may be small enough, then for any $\varepsilon\in(0,\varepsilon_0)$. Define
$$T^*=\sup\left\{t\in(0,T^{\max})\big|\sup\limits_{\tau\in[0,t]}\mathcal E\left(\left(n^{+,\varepsilon}_\vartheta,u^{+,\varepsilon}_\vartheta,n^{-,\varepsilon}_\vartheta,u^{-,\varepsilon}_\vartheta\right)(\tau)\right)\le \varepsilon_0\right\}$$
and
$$T^{**}=\sup\left\{t\in(0,T^{\max})\big|\sup\limits_{\tau\in[0,t]}\left\|\left(n^{+,\varepsilon}_\vartheta,u^{+,\varepsilon}_\vartheta,n^{-,\varepsilon}_\vartheta,u^{-,\varepsilon}_\vartheta\right)(\tau)\right\|_{L^2}\le \varepsilon\varepsilon_0^{-\frac{1}{3}}\text{e}^{\theta t}\right\}$$
where $T^{\max}$ denotes the maximal time of existence. Obviously,  $T^*T^{**}>0$, and furthermore,
\begin{equation}\mathcal E\left(\left(n^{+,\varepsilon}_\vartheta,u^{+,\varepsilon}_\vartheta,n^{-,\varepsilon}_\vartheta,u^{-,\varepsilon}_\vartheta\right)(T^*)\right)= \varepsilon_0\quad \text{if} \quad T^*<\infty,\nonumber
\end{equation}
and
\begin{equation}\left\|\left(n^{+,\varepsilon}_\vartheta,u^{+,\varepsilon}_\vartheta,n^{-,\varepsilon}_\vartheta,u^{-,\varepsilon}_\vartheta\right)(T^{**})\right\|_{L^2}= \varepsilon\varepsilon_0^{-\frac{1}{3}}\text{e}^{\theta T^{**}}\quad \text{if} \quad T^{**}<\infty.\label{4.1}
\end{equation}
Assume $T^*=\infty$, otherwise let $T^\varepsilon=T^*$ and $\delta_0=\varepsilon_0$, we can prove Theorem \ref{3mainth} immediately.
Let
\begin{equation}\label{4.2}T^\varepsilon=\frac{1}{\theta}\ln\frac{2\varepsilon_0}{\varepsilon}\left(\text{i.e.,}\  \varepsilon\text{e}^{\theta T^\varepsilon}=2\varepsilon_0\right)\quad \text{and}\quad \vartheta=\frac{1}{T^\varepsilon}.
\end{equation}
Set $\left(n^+_{d},u^+_{d},n^-_{d},u^-_{d}\right)=\left(n^{+,\varepsilon}_{\vartheta},u^{+,\varepsilon}_{\vartheta},n^{-,\varepsilon}_{\vartheta},u^{-,\varepsilon}_{\vartheta}\right)-\varepsilon(\tilde n^+_\vartheta,\tilde u^+_\vartheta,\tilde n^-_\vartheta,\tilde u^-_\vartheta)$. Noticing that $$\left(n^+_{l},u^+_{l},n^-_{l},u^-_{l}\right)=\varepsilon(\tilde n^+_\vartheta,\tilde u^+_\vartheta,\tilde n^-_\vartheta,\tilde u^-_\vartheta)$$ is also a solution to the linear system  \eqref{1.28} with the initial data $\left(n^{+,\varepsilon}_{0,\vartheta},u^{+,\varepsilon}_{0,\vartheta},n^{-,\varepsilon}_{0,\vartheta},u^{-,\varepsilon}_{0,\vartheta}\right)\in H^2(\mathbb R^3)$, it is clear that $\left(n^+_{d},u^+_{d},n^-_{d},u^-_{d}\right)$ is a solution to the system
\begin{equation}\label{4.3}
\left\{\begin{array}{l}
\partial_{t} n^{+}_d+\beta_{1} \operatorname{div} u^{+}_d=\mathbb{F}_{1}, \\
\partial_{t} u^{+}_d+\beta_{1} \nabla n^{+}_d+\beta_{2} \nabla n^{-}_d-v_{1}^{+} \Delta u^{+}_d-v_{2}^{+} \nabla \operatorname{div} u^{+}_d=\mathbb{F}_{2}, \\
\partial_{t} n^{-}_d+\beta_{4} \operatorname{div} u^{-}_d=\mathbb{F}_{3}, \\
\partial_{t} u^{-}_d+\beta_{3} \nabla n^{+}_d+\beta_{4} \nabla n^{-}_d-v_{1}^{-} \Delta u^{-}_d-v_{2}^{-} \nabla \operatorname{div} u^{-}_d=\mathbb{F}_{4},
\end{array}\right.
\end{equation}
subject to the initial condition
\begin{equation}\label{4.4}
\left(n^{+}_d, u^{+}_d, n^{-}_d, u^{-}_d\right)(x, 0)=0,
\end{equation}
where the nonlinear terms are given by
\[
\mathbb{F}_{1}=\alpha_{1} F_{1}\left(\frac{n^{+,\varepsilon}_\vartheta}{\alpha_{1}}, \frac{u^{+,\varepsilon}_\vartheta}{\sqrt{\alpha_{1}}}\right), \quad \mathbb{F}_{2}=\sqrt{\alpha_{1}} F_{2}
\left(\frac{n^{+,\varepsilon}_\vartheta}{\alpha_{1}}, \frac{u^{+,\varepsilon}_\vartheta}{\sqrt{\alpha_{1}}}, \frac{n^{+,\varepsilon}_\vartheta}{\alpha_{4}}, \frac{u^{+,\varepsilon}_\vartheta}{\sqrt{\alpha_{4}}}\right),
\]
and
\[
\mathbb{F}_{3}=\alpha_{4} F_{3}\left(\frac{n^{+,\varepsilon}_\vartheta}{\alpha_{4}}, \frac{u^{+,\varepsilon}_\vartheta}{\sqrt{\alpha_{4}}}\right), \quad \mathbb{F}_{4}=\sqrt{\alpha_{4}} F_{4}\left(\frac{n^{+,\varepsilon}_\vartheta}{\alpha_{1}}, \frac{u^{+,\varepsilon}_\vartheta}{\sqrt{\alpha_{1}}}, \frac{n^{+,\varepsilon}_\vartheta}{\alpha_{4}}, \frac{u^{+,\varepsilon}_\vartheta}{\sqrt{\alpha_{4}}}\right).
\]
\bigskip

Now, we claim that
\begin{equation}\label{4.5} T^\varepsilon=\min\{T^\varepsilon,T^{**}\},
\end{equation}
provided that $\varepsilon_0$ is small enough. Indeed, if $T^{**}=\min\{T^\varepsilon,T^{**}\}$, then $T^{**}<\infty$.
By defining $U=(n^+_d, u^+_d, n^-_d, u^-_d)^t$ and $\mathcal
F=(\mathcal{F}^1,\mathcal{F}^2,\mathcal{F}^3,\mathcal{F}^4)^t$, it holds from Duhamel's principle that
\begin{equation}\nonumber U=\int_0^t\text{e}^{(t-\tau)\mathcal{A}}\mathcal F(\tau)\mathrm{d}\tau.
\end{equation}
By virtue of Proposition \ref{Prop3.6} and \eqref{4.2},  we
have after a complicated but straightforward computation that
\begin{equation}\label{4.6}\begin{split}\|U(T^{**})\|_{L^2}\lesssim& \int_0^{T^{**}}\left\|\text{e}^{(t-\tau)\mathcal{A}}\mathcal F(\tau)\right\|_{L^2}\mathrm{d}\tau\\ \lesssim &\int_0^{T^{**}}\text{e}^{\theta(t-\tau)}\left\|\mathcal F(\tau)\right\|_{L^2}\mathrm{d}\tau
\\ \lesssim &\int_0^{T^{**}}\text{e}^{\theta(t-\tau)}\left(\left\|\left(n^{+,\varepsilon}_{\vartheta},
u^{+,\varepsilon}_{\vartheta},n^{-,\varepsilon}_{\vartheta},u^{-,\varepsilon}_{\vartheta}\right)(\tau)\right\|_{L^2}\left\|\nabla\left(n^{+,\varepsilon}_{\vartheta},
u^{+,\varepsilon}_{\vartheta},n^{-,\varepsilon}_{\vartheta},u^{-,\varepsilon}_{\vartheta}\right)(\tau)\right\|_{W^{1,\infty}}\right.
\\&+\left.\left\|\nabla\left(n^{+,\varepsilon}_{\vartheta},
u^{+,\varepsilon}_{\vartheta},n^{-,\varepsilon}_{\vartheta},u^{-,\varepsilon}_{\vartheta}\right)(\tau)\right\|_{L^4}^2\right)\mathrm{d}\tau
\\ \lesssim &~\int_0^{T^{**}}\text{e}^{\theta(t-\tau)}\varepsilon\varepsilon_0^{-\frac{1}{3}}\text{e}^{\theta \tau}\left(\left(\varepsilon\varepsilon_0^{-\frac{1}{3}}\text{e}^{\theta \tau}\right)^{\frac{1}{6}}\varepsilon_0^{\frac{5}{6}}+\left(\varepsilon\varepsilon_0^{-\frac{1}{3}}\text{e}^{\theta \tau}\right)^{\frac{1}{8}}\varepsilon_0^{\frac{7}{8}}\right)\mathrm{d}\tau
\\ \lesssim &~\varepsilon\varepsilon_0^{-\frac{1}{3}}\text{e}^{\theta T^{**}}\left(\left(\varepsilon\text{e}^{\theta T^{**}}\right)^{\frac{1}{6}}\varepsilon_0^\frac{7}{9}+\left(\varepsilon\text{e}^{\theta T^{**}}\right)^{\frac{1}{8}}\varepsilon_0^\frac{5}{6}\right)\\ \lesssim &~\varepsilon_0^\frac{17}{18}\left(\varepsilon\varepsilon_0^{-\frac{1}{3}}\text{e}^{\theta T^{**}}\right), \end{split}\end{equation}
 where, by H\"older's inequality and Sobolev's inequality, we used the facts
$$\|\nabla f\|_{L^\infty}\lesssim \|f\|_{L^2}^\frac{1}{6}\|\nabla^3 f\|_{L^2}^\frac{5}{6},$$
$$\|\nabla^2 f\|_{L^\infty}\lesssim \|f\|_{L^2}^\frac{1}{8}\|\nabla^4 f\|_{L^2}^\frac{7}{8}$$
and
$$\|\nabla f\|_{L^4}\lesssim \|f\|_{L^2}^\frac{9}{16}\|\nabla^4 f\|_{L^2}^\frac{7}{16}.$$
If $\varepsilon_0$ is small enough, by Proposition \ref{Prop2.2} and \eqref{4.6}, we see that
\begin{equation}\nonumber\left\|\left(n^{+,\varepsilon}_\vartheta,u^{+,\varepsilon}_\vartheta,n^{-,\varepsilon}_\vartheta,u^{-,\varepsilon}_\vartheta\right)(T^{**})\right\|_{L^2}
\le C\left( \varepsilon\text{e}^{\theta T^{**}} +\varepsilon_0^\frac{17}{18}\left(\varepsilon\varepsilon_0^{-\frac{1}{3}}\text{e}^{\theta T^{**}}\right)\right)<\varepsilon\varepsilon_0^{-\frac{1}{3}}\text{e}^{\theta T^{**}},
\end{equation}
which contradicts with \eqref{4.1}.

Finally, performing the similar procedure as in \eqref{4.6} and using Proposition \ref{Prop2.2}, we deduce that
\begin{equation}\begin{split}&\left\|\left(n^{+,\varepsilon}_\vartheta,u^{+,\varepsilon}_\vartheta,n^{-,\varepsilon}_\vartheta,u^{-,\varepsilon}_\vartheta\right)(T^{\varepsilon})\right\|_{L^2}
\\ \ge&~\text{e}^{(\theta-\vartheta)T^\varepsilon}\varepsilon \left\|\left(\tilde n^+_{0,\vartheta},\tilde u^+_{0,\vartheta},\tilde n^-_{0,\vartheta},\tilde u^-_{0,\vartheta}\right)\right\|_{L^2}-C\varepsilon_0^\frac{17}{18}\left(\varepsilon\varepsilon_0^{-\frac{1}{3}}\text{e}^{\theta T^{\varepsilon}}\right)\\ \ge&~\frac{2\varepsilon_0m_0}{\text{e}}-C\varepsilon_0^\frac{29}{18}\\ \ge&~\frac{\varepsilon_0m_0}{\text{e}},
\end{split}\end{equation}
if $\varepsilon_0$ is small enough, where $m_0=\left\|\left(\tilde n^+_{0,\vartheta},\tilde u^+_{0,\vartheta},\tilde n^-_{0,\vartheta},\tilde u^-_{0,\vartheta}\right)\right\|_{L^2}$. This completes the proof of Theorem \ref{3mainth} by defining
$\delta_0=\min\left\{\varepsilon_0,\frac{\varepsilon_0m_0}{\text{e}}\right\}$. \hfill$\Box$

\bigskip

{\bf Statement:}  No conflict of interest exists in the submission of this
manuscript, and 
the datasets generated during and/or analysed during the current study are available from the corresponding author on reasonable request.

\section*{Acknowledgments}
 Yinghui Zhang' research is
partially supported by Guangxi Natural Science Foundation
$\#$2019JJG110003, $\#$2019AC20214 and Key Laboratory of Mathematical and Statistical Model (Guangxi Normal University), Education Department of Guangxi Zhuang Autonomous Region. Lei Yao's research  is
	partially  supported by National Natural Science Foundation of China
	$\#$12171390, $\#$11931013, $\#$11571280 and Natural Science Basic Research Plan
	for Distinguished Young Scholars in Shaanxi Province of China (Grant
	No. 2019JC-26).

\bigskip

\end{document}